\documentclass[final]{siamltex}
\usepackage{enumerate}
\usepackage{tikz}
\usetikzlibrary{snakes}
\usetikzlibrary{arrows}
\def\G{{\mathcal{G}}}
\def\poly{\mathrm{poly}}

\begin{document}
\title{A decomposition based proof for fast mixing of a Markov chain over
balanced realizations of a joint degree matrix\thanks{All authors acknowledge
financial support from grant \#FA9550-12-1-0405 from the U.S. Air Force Office
of Scientific Research (AFOSR) and the Defense Advanced Research Projects
Agency (DARPA).}}

\author{P\'eter L. Erd\H os\footnotemark[2]\ \footnotemark[5]
\and Istv\'an Mikl\'os\footnotemark[2]\ \footnotemark[6] \footnotemark[7]
\and Zolt\'an Toroczkai\footnotemark[3] \footnotemark[4] \footnotemark[8] }

\renewcommand{\thefootnote}{\fnsymbol{footnote}}
\footnotetext[2]{Alfr\'ed R{\'e}nyi Institute, Re\'altanoda u 13-15 Budapest, 1053 Hungary.
{\tt email:} \{erdos.peter, miklos.istvan\}@renyi.mta.hu}
\footnotetext[3]{Department of Physics and Interdisciplinary Center for Network Science
\& Applications \\ University of Notre Dame, Notre Dame, IN, 46556, USA.
 {\tt email:} toro@nd.edu}
\footnotetext[4]{Max Planck Institute for the Physics of Complex Systems, 01187
Dresden, Germany.}
\footnotetext[5]{Supported in part by the Hungarian NSF, under contract NK 78439}
\footnotetext[6]{Supported in part by Hungarian NSF, under contract  PD84297}
\footnotetext[7]{Correspondence to: I. Mikl\'os}
\footnotetext[8]{Supported in part by the Defense Threat Reduction
Agency, \#HDTRA 1-09-1-0039.}
\renewcommand{\thefootnote}{\arabic{footnote}}

\maketitle

\begin{abstract}
A joint degree matrix (JDM) specifies the number of connections between nodes of given degrees in a graph, for all degree pairs and uniquely determines the degree sequence of the graph. We consider the space of all balanced realizations of an arbitrary JDM, realizations in which the links between any two, fixed-degree groups of nodes are placed as uniformly as possible. We prove that a swap Markov Chain Monte Carlo (MCMC) algorithm in the space of all balanced realizations of an {\em arbitrary} graphical JDM mixes rapidly, i.e., the relaxation time of the chain is bounded  from above by a polynomial in the number of nodes $n$.  To prove fast mixing, we first prove a general factorization theorem similar to the Martin-Randall method for disjoint decompositions (partitions). This theorem can be used to bound from below the spectral gap with the help of fast mixing subchains within every partition and a bound on an auxiliary Markov chain between the partitions. Our proof of the general factorization theorem is direct and uses conductance based methods (Cheeger inequality).
\end{abstract}

\begin{keywords}
graph sampling, joint degree matrix, rapidly mixing Markov Chains
\end{keywords}

\begin{AMS}
05C30, 05C81, 68R10
\end{AMS}

\pagestyle{myheadings}
\thispagestyle{plain}
\markboth{P.L. ERD\H{O}S, I. MIKL\'OS AND Z. TOROCZKAI}{FAST MIXING MARKOV CHAIN FOR JOINT DEGREE MATRICES}

\section{Introduction}\label{sec:intro}

The sampling of simple graphs on fixed number of nodes and with given degree
sequence is a well studied problem both by the statistics community (binary contingency tables  \cite{Bezakova}, \cite{R2}, \cite{R3}, \cite{R1}, \cite{R4}, \cite{DiaconisGanolli95},) and the computer science community. Lately the interest in this sampling problem has been widening with applications ranging from social sciences, physics, biology to engineering. The sampling approaches can be classified roughly into two types, one using Markov Chain Monte Carlo (MCMC)  algorithms \cite{R6}, \cite{R7}, \cite{R8}, \cite{R5}, \cite{R9}, \cite{R10}, \cite{rand} based on simple moves such as edge swaps and the other using direct  construction methods \cite{StarConstSampling,kiraly} and importance sampling  \cite{StarConstSampling,scdir}.

The MCMC approach, while conceptually simple, presents a notoriously difficult question, namely proving (or disproving) the fast mixing nature of a proposed Markov chain.  A Markov chain mixes fast if its mixing time (expressed via  the inverse of the spectral gap; for a summary of other measures see \cite{Ganapathy}) has a polynomial upper bound in the size of the problem, usually taken as the number of nodes $n$. In a seminal paper Kannan, Tetali and Vempala \cite{ktv} conjectured that the natural MCMC algorithm based on edge swaps mixes rapidly for arbitrary graphical degree sequences and they proved it for regular bipartite graphs. In 2007 Cooper, Dyer and Greenhill \cite{cdg} proved fast mixing for regular degree sequences of undirected graphs and subsequently Greenhill gave a proof for regular directed graphs \cite{g11}. Recently, Mikl\'os, Erd\H{o}s and Soukup \cite{mes} proved fast mixing for half-regular bipartite graphs (nodes have to have the same degree on only one side of the bipartition), using a modified version of Sinclair's multicommodity flow method \cite{sinc}.  In spite of these results on particular degree sequences for simple graphs, the proof for the general case, however, remains elusive. Recall that in a simple graph no multiple edges exist between any pairs of nodes, and there are no self-loops either.

To gain a better understanding of the mixing problem for arbitrary degree sequences of simple graphs we may follow a different approach. In particular, we may consider the sampling problem on a class of graphs obeying stronger constraints than the degree sequence, however, constraints that can be used to implicitly define an {\em arbitrary} degree sequence.

One such possibility is the joint degree matrix problem first introduced by Patrinos and Hakimi \cite{PH73} and subsequently studied by Amanatidis, Green and Mihail \cite{agm08}, Stanton and Pinar \cite{stanton} and then by Czabarka et al \cite{JDM}. A joint degree matrix (JDM) fixes the number of edges between nodes of given degrees, for all degree pairs. This constraint is stronger than just fixing the degree sequence, however, it uniquely defines the degree sequence (see the next section). Thus, we may study the MCMC sampling problem and the associated mixing time question on the space of all the graphical realizations of a given JDM. This means that we study the degree based sampling problem on a subspace of it, restricted by the degree-degree correlations imposed through the JDM. In a different context, Stanton and Pinar \cite{stanton} propose a restricted swap operation based Markov chain over the space of realizations of a JDM, which was recently shown to be irreducible by Czabarka et.al \cite{JDM}, a necessary condition of a MCMC sampling algorithm. Note that the JDM problem is interesting in its own right, with applications in social sciences (the degree assortativity problem) and data driven modeling of real-world networks \cite{stanton}.

However, proving that a suitable MCMC algorithm is fast mixing over the space of all
realizations of a JDM is also difficult. Here, instead, we prove fast mixing on a subspace of realizations of an arbitrary JDM, namely the space of balanced realizations. As shown in \cite{JDM} and \cite{stanton}, all graphical JDMs admit balanced realizations, in which the partial degrees of nodes from a given degree class towards another degree class are as uniformly distributed as possible. To prove fast mixing, we first introduce a disjoint partition of the space of balanced realizations. The elements in each partition can be expressed as the union of almost regular graphs and almost semi-regular bipartite graphs (see Def \ref{def:union}), while the structure of the partitioning can be also described through the collection of half-regular bipartite graphs (see Section \ref{sec:str} for definitions). We then
develop a modified version of the factorization theorem (\cite{MR06}, Thm 3.2) in the disjoint decomposition method first introduced by Martin and Randall \cite{MR06} to provide a lower bound for the spectral gap that is more readily computable. The factorization theorem of Martin and Randall for disjoint decompositions is based on a result by Caracciolo, Pelissetto and Sokal originally introduced in the framework of simulated tempering \cite{CPS}. The case of non-disjoint, large-overlap decompositions has been treated earlier by Madras and Randall \cite{MR02}. The Martin-Randall method for disjoint decompositions introduces a projection Markov chain between the partitions defined with the help of average probabilities. Unfortunately it is difficult to estimate/bound the spectral gap in this chain, and they resort
to a lower bound defined via a Metropolis-Hastings chain \cite{MR02}. Our main Theorem \ref{theo:general} also works with a lower bound, however, it is more general and we provide a direct and short proof for it using conductance based arguments.

The paper is organized as follows: after preliminaries related to JDMs we present the
main theorem (Thm 2.3) for fast mixing and our proof strategy. We then describe in
Section \ref{sec:str} the structure of the space of balanced realizations of a JDM and present a disjoint partitioning of this space into  subspaces whose elements are formed by almost regular and almost semi-regular subgraphs.  In Section \ref{sec:factor} using conductance based arguments, we prove our general theorem for fast mixing over state spaces that can be disjointly partitioned as described in Section \ref{sec:str} . In Section \ref{sec:RSO} we briefly recall earlier results on fast mixing over realizations of regular degree sequences and half-regular bipartite degree sequences, then extend their proofs (in the Appendix) to almost regular and almost half-regular cases. We then introduce our Markov chain over the space of balanced realizations, and complete the proof of our main theorem.

\section{Preliminaries}\label{sec:pre}

A symmetric matrix $J$ with non-negative integer elements  is the {\em joint degree matrix} (JDM) of an undirected simple graph $G$ iff the element  $J_{i,j}$ gives the number of edges between the class $V_i$ of vertices all having degree $i > 0$ and the class $V_j$ of vertices all with degree $j>0$ in the graph.  In this case we also say that $J$ is {\em graphical} and  that $G$ is a {\em graphical realization} of $J$. Note that there can be many different graphical realizations of the same JDM.

Given a JDM, the number of vertices $n_i = |V_i|$ in class $i$ is obtained from:
\begin{equation}
n_i = \frac{ J_{i,i}+\sum_{j=1}^k J_{i,j} }{i}\;, \label{ni}
\end{equation}
where $k$ denotes the number of distinct degree classes. This implies that a JDM also
uniquely determines the degree sequence, since we have obtained the number of nodes of given degrees for all possible degrees. Clearly $k \leq \Delta$, where $\Delta$ is the maximum degree. For sake of uniformity we consider all vertex classes $V_k$ for $k=1,\ldots,\Delta$; therefore we consider empty classes with $n_k=0$ vertices as well.  A necessary condition for $J$ to be graphical is that all the $n_i$-s are integers. Let $n$ denote the total number of vertices. Naturally, $n = \sum_{i} n_i$ and it is uniquely determined via Eq (\ref{ni}) for a given graphical JDM. The necessary and sufficient conditions for a given JDM to be graphical are provided, for e.g., in paper \cite{JDM}.

Let $d_j(v)$ denote the number of  edges such that one end-vertex is $v$ and the other end-vertex belongs to $V_j$, i.e., $d_j(v)$ is the degree of $v$ in $V_j$. The vector consisting of the $d_j(v)$-s for all $j$ is called the {\em degree spectrum} of vertex $v$. We introduce the notation
$$
\Theta_{i,j}=\left\{
               \begin{array}{ll}
                 0, & \hbox{if } n_i=0\,, \\
                 \frac{J_{i,j}}{n_i}, & \hbox{otherwise,}
               \end{array}
             \right.
$$
which gives the average number of the neighbors of a degree-$i$ vertex in vertex class $V_j$. Then a realization of the JDM is {\em balanced} iff for every $i$ and all $v \in V_i$ and all $j$, we have
$$
\left|d_j(v) - \Theta_{i,j}\right| < 1\;.
$$

The following theorem is proven in paper \cite{JDM} as Corollary 5:
\begin{theorem}
\label{theo:existance}
Every graphical JDM admits a balanced realization.
\end{theorem}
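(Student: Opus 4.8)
The plan is to start from an arbitrary graphical realization and \emph{rebalance} it by a sequence of degree-preserving edge swaps, each of which strictly decreases a scalar measure of imbalance. Since $J$ is graphical by hypothesis, fix any realization $G_0$, and for a realization $G$ define the potential
\begin{equation}
\Phi(G) = \sum_{i,j}\ \sum_{v \in V_i} d_j(v)^2 .
\end{equation}
Because $\sum_{v \in V_i} d_j(v) = J_{i,j}$ is a fixed constant for every block $(i,j)$, the quantity $\Phi$ is minimized exactly when, inside each block, the partial degrees $d_j(v)$ are made as equal as possible; as these integers average to $\Theta_{i,j}$, this is precisely the balance condition $|d_j(v)-\Theta_{i,j}|<1$. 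Hence it suffices to show that as long as $G$ is not balanced one can strictly decrease $\Phi$ while preserving the JDM.

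Suppose $G$ is not balanced. Then there are a block $(i,j)$ and vertices $v,v' \in V_i$ with $d_j(v) \ge d_j(v')+2$. Since $v$ and $v'$ have equal total degree $i$, conservation $\sum_{\ell} d_\ell(v) = \sum_{\ell} d_\ell(v')$ forces a class $j' \ne j$ with $d_{j'}(v') \ge d_{j'}(v)+1$. I would then locate a neighbor $w \in V_j$ of $v$ that is not a neighbor of $v'$ (it exists because $|N_j(v)| > |N_j(v')|$) and a neighbor $x \in V_{j'}$ of $v'$ that is not a neighbor of $v$ (it exists because $|N_{j'}(v')| > |N_{j'}(v)|$), and perform the swap that deletes $vw, v'x$ and inserts $vx, v'w$. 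A direct check shows every vertex retains its total degree, the number of edges between any two classes is unchanged (so $J$ is preserved), and the only partial degrees altered are $d_j(v) \mapsto d_j(v)-1$, $d_j(v') \mapsto d_j(v')+1$, $d_{j'}(v) \mapsto d_{j'}(v)+1$, and $d_{j'}(v') \mapsto d_{j'}(v')-1$.

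Computing the effect on $\Phi$, the contribution of block $(i,j)$ changes by $-2\bigl(d_j(v)-d_j(v')\bigr)+2 \le -2$, the contribution of block $(i,j')$ changes by $2\bigl(d_{j'}(v)-d_{j'}(v')\bigr)+2 \le 0$, and all remaining terms are untouched. Thus each swap lowers $\Phi$ by at least $2$. Since $\Phi$ is a nonnegative integer, only finitely many such swaps are possible, so the procedure terminates at a realization of $J$ in which no such swap exists, which by the equivalence noted above is a balanced realization.

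The main obstacle is the verification that the four vertices $v,v',w,x$ can always be chosen pairwise distinct and that $vx,v'w$ are genuinely non-edges; this is delicate precisely for the intra-class blocks ($j=i$ or $j'=i$), where a candidate for $w$ or $x$ could coincide with $v'$ or $v$. The surplus $d_j(v)-d_j(v')\ge 2$ furnishes at least two candidates for $w$, so $w \ne v'$ can be arranged; the genuinely tight case is securing $x \ne v$ when $j'=i$ and only one candidate is present, which I would resolve by exploiting the freedom in the choice of $j'$—the total positive surplus $\sum_{\ell \ne j}\bigl(d_\ell(v')-d_\ell(v)\bigr)$ is at least $2$—or by a short separate argument dispatching the degenerate configurations. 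Everything else reduces to a routine degree count.
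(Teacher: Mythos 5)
Your argument is essentially correct, but it cannot be compared to a proof in the paper, because the paper gives none: Theorem~2.1 is quoted verbatim from Czabarka et al.\ \cite{JDM} (their Corollary~5), and the authors simply cite that result. What you have produced is a self-contained local-improvement proof: start from any realization, minimize the variance-type potential $\Phi$ over the finite set of realizations, and show that a non-balanced realization always admits a JDM-preserving swap strictly decreasing $\Phi$. This is a standard and correct strategy, and it has a pleasant side benefit you might make explicit: the swap you construct (deleting $vw,v'x$ and inserting $vx,v'w$ with $v,v'$ in the same class) is exactly a restricted swap operation in the sense of Section~2, so your proof shows not merely existence but that \emph{every} realization can be driven to a balanced one by RSOs, which is in the spirit of the connectivity results the paper relies on. Your bookkeeping checks out: the four partial degrees you list are indeed the only ones that change (the vertices $w$ and $x$ trade one $V_i$-neighbor for another, so their degree spectra are untouched), and the decrement of $\Phi$ by at least $2$ is computed correctly. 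The one gap you flag --- securing $x\ne v$ when $j'=i$ --- is real but your proposed fix does close it: since $\sum_{\ell\ne j}\bigl(d_\ell(v')-d_\ell(v)\bigr)=d_j(v)-d_j(v')\ge 2$, either some class carries surplus at least $2$ (giving two candidates for $x$, at most one of which is $v$), or two distinct classes each carry surplus at least $1$, and at most one of them can be $V_i$, so the other yields an $x$ automatically distinct from $v$. Two small cautions: (a) you should pick $j'$ with $j'\ne j$, as you do, so that $w\ne x$ is automatic; (b) on the diagonal blocks the paper's literal $\Theta_{i,i}=J_{i,i}/n_i$ is \emph{not} the average of $d_i(v)$ over $V_i$ (that average is $2J_{i,i}/n_i$, since internal edges are counted twice), so your sentence identifying the minimizers of $\Phi$ with the balance condition is correct only for the intended reading of ``as uniform as possible''; this is an infelicity of the paper's definition rather than an error in your argument, but it is worth a remark.
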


A restricted swap operation (RSO) takes two edges $(x,y)$ and $(u,v)$ with $x$ and $u$
from the same vertex class and swaps them with two non-edges $(x,v)$ and  $(u,y)$. The
RSO preserves the JDM, and in fact forms an irreducible Markov chain over all   its
realizations \cite{JDM}. An RSO Markov chain restricted to {\em balanced realizations}  can be defined as follows:
\begin{definition}
Let $J$ be a JDM. The state space of the RSO Markov chain consists of all the balanced realizations of $J$. It was proved by Czabarka et al. {\rm \cite{JDM}} that this state space is connected under restricted swap operations. The transitions of the Markov chain are defined in the following way. With probability $1/2$, the chain does nothing, so it remains in the current state (we consider a lazy Markov chain). With probability $1/2$ the chain will chose four, pairwise disjoint vertices, $v_1, v_2, v_3,v_4$ from the current realization (the possible choices are order dependent) and check whether $v_1$ and $v_2$ are chosen from the same vertex class, furthermore whether the
$$
 E\setminus\{(v_1,v_3),(v_2,v_4)\} \cup   \{(v_1,v_4), (v_2,v_3)\}
$$
swap operation is feasible. If this is the case then our Markov chain performs the swap operation if it leads to another balanced JDM realization. Otherwise the Markov chain remains in the same state. {\rm (Note that exactly two different orders of the selected vertices will provide the same swap operation, since the roles of $v_1$ and $v_2$ are symmetric.)} Then there is a transition with probability
$$
\frac{1}{n(n-1)(n-2)(n-3)}
$$
between two realizations iff there is a RSO transforming one into the other.
\end{definition}

In this paper, we prove that such a Markov chain is rapidly mixing. The  convergence
of a Markov chain is measured as a function of the  input data size. Here we note that
the size of the data is the  number of vertices (or number of edges, they are polynomially
bounded functions of each other) and not the number of digits to describe the  JDM.
This distinction is important as, for example, one can create a $2 \times  2$ JDM with values
$J_{2,2}=J_{3,3} = 0$ and $J_{2,3}=J_{3,2} = 6n$,  which has  $\Omega(n)$ number
of vertices (or edges) but it needs only  $O(\log(n))$ number of digits to describe
(except in the unary number system). Alternatively, one might consider the input is given in unary.

Formally, we state the rapid mixing property via the following theorem:

\begin{theorem}
\label{theo:main}
The RSO Markov chain on balanced JDM realizations is a rapidly mixing Markov chain,
namely, for the second largest eigenvalue $\lambda_2$ of this chain, it holds that
$$
\frac{1}{1-\lambda_2} = O(\poly(n))
$$
where $n$ is the number of vertices in the realizations of the JDM.
\end{theorem}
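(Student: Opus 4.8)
The plan is to prove fast mixing by the disjoint decomposition (factorization) method advertised in the introduction, reducing the global spectral gap to the gaps of two simpler families of chains via our general factorization Theorem~\ref{theo:general}. First I would partition the state space of balanced realizations into blocks indexed by the full collection of \emph{degree spectra}: for every $v\in V_i$ and every class $j$, balance forces $d_j(v)$ to equal either $\lfloor\Theta_{i,j}\rfloor$ or $\lceil\Theta_{i,j}\rceil$, so recording, for each pair $(i,j)$, which vertices of $V_i$ carry the larger partial degree toward $V_j$ fixes a degree-spectrum assignment and hence a block. The point of this partition is structural: once all degree spectra are frozen, a balanced realization splits into independent pieces — an almost-regular graph on each $V_i$ (the internal $J_{i,i}$ edges) together with an almost semi-regular bipartite graph between each pair $V_i,V_j$ — exactly the union of Definition~\ref{def:union}.

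Next I would classify the restricted swaps relative to this partition. A swap deleting $(v_1,v_3),(v_2,v_4)$ and inserting $(v_1,v_4),(v_2,v_3)$, with $v_1,v_2$ in a common class, leaves $v_3$ and $v_4$ with unchanged spectra, and leaves the spectra of $v_1,v_2$ unchanged precisely when $v_3$ and $v_4$ lie in the same class; those \emph{within-block} moves act independently on the single almost-regular or almost semi-regular component that contains all four vertices. Swaps with $v_3,v_4$ in different classes shift one unit of partial degree between two neighbouring classes for each of $v_1,v_2$, and hence move between blocks. Consequently the within-block chain is a direct product of swap chains on almost-regular graphs and on almost semi-regular bipartite graphs acting on edge-disjoint coordinates; its rapid mixing I would obtain by adapting the Cooper--Dyer--Greenhill analysis~\cite{cdg} for the (almost-)regular pieces and the Mikl\'os--Erd\H{o}s--Soukup multicommodity-flow analysis~\cite{mes} for the (almost-)half-regular bipartite pieces, with the ``almost'' extensions deferred to the Appendix. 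A product chain has spectral gap bounded below by the minimum factor gap divided by the (polynomial) number of components, so this stays polynomial.

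The remaining and, I expect, the main obstacle is the \emph{projection} (auxiliary) chain on the set of blocks, whose moves correspond to the block-changing swaps above: for two vertices of a common class, one unit of partial degree is transferred from one neighbouring class to another while every row and column sum of the JDM is preserved. I would analyse this chain by exhibiting its state space through the collection of half-regular bipartite graphs introduced in Section~\ref{sec:str}, and then bound its spectral gap, either by a direct conductance estimate or by comparison with an already-understood half-regular swap chain. The delicate point is that both endpoints of a block-changing swap must themselves be \emph{balanced}, so the admissible projection moves are severely constrained; establishing that, despite these constraints, any two degree-spectrum assignments are joined by a short sequence of balanced moves with good congestion is where the genuine combinatorial work lies.

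Finally I would assemble the estimates through Theorem~\ref{theo:general}, whose own proof proceeds by the conductance (Cheeger) argument announced in the abstract. Supplying the theorem with the polynomial within-block gaps, the polynomial projection-chain bound, a polynomial bound on the number of blocks, and a polynomial bound on the ratio of stationary weights across blocks — the last two controlled because all balanced realizations carry equal probability and block sizes differ only through the unit ``almost'' slack — yields a polynomial lower bound on $1-\lambda_2$ for the full RSO chain, i.e.\ the asserted $1/(1-\lambda_2)=O(\poly(n))$.
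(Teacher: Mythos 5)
Your proposal follows essentially the same route as the paper: your partition by degree spectra is exactly the paper's partition by the auxiliary half-regular bipartite graphs $x=(\mathcal{G}_1,\dots,\mathcal{G}_\Delta)$, the within-block chain is treated as a direct product of almost-regular and almost semi-regular swap chains via the extensions of \cite{cdg} and \cite{ekms} (Theorems~\ref{theo:regular}, \ref{theo:half-regular}, \ref{theo:directproduct}), the projection chain is the swap chain on those half-regular auxiliary graphs, and everything is assembled through Theorem~\ref{theo:general}. The one step you flag as the remaining combinatorial work --- that block-changing moves stay within balanced realizations --- is resolved in the paper by the short local argument of Theorem~\ref{theo:swapexist}(ii), which shows every single swap on an auxiliary graph lifts to at least one RSO between balanced realizations, so condition~(\ref{eq:condition}) holds with $T'$ assigning each feasible swap probability $1/(n(n-1)(n-2)(n-3))$.
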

Note that the expression on the LHS is called, with some abuse of notation, the {\em relaxation time}: it is the time is needed for the Markov chain to reach its stationary distribution. The proof is based on the special structure  of the state space of the balanced JDM realizations. This special structure allows the following proof strategy: if we can prove that some auxiliary Markov chains are rapidly mixing on some sub-spaces obtained from decomposing the above-mentioned specially structured state space, then the Markov chain on the whole space is also rapidly mixing. We are going to prove the rapid mixing of these auxiliary Markov chains, as well as give the proof of the general theorem, that a Markov chain on this special structure is rapidly mixing, hence proving our main Theorem~\ref{theo:main}.

\section{The structure of the space of balanced JDM realizations, and the
Markov chain over this space}\label{sec:str}

In order to describe the structure of the space of balanced JDM realizations, we first define the
almost semi-regular bipartite and almost regular graphs.
\begin{definition}
A bipartite graph $G(U,V;E)$ is \emph{almost semi-regular} if for any $u_1, u_2 \in U$ and $v_1,v_2 \in V$
$$\left|d(u_1) - d(u_2) \right| \le 1$$
and
$$\left|d(v_1) - d(v_2) \right| \le 1.$$
\end{definition}
\begin{definition}\label{def:almost}
A graph $G(V,E)$ is \emph{almost regular}, if for any $v_1,v_2 \in V$
\end{definition}
$$\left|d(v_1) - d(v_2) \right| \le 1.$$
It is easy to see that the restriction of any graphical realization of the JDM to vertex classes $V_i, V_j, i\ne j$ can be considered as the coexistence of two almost regular graphs (one on $V_i$ and the other on $V_j$), and one almost semi-regular bipartite graph on the vertex class pair $V_i,V_j$. More generally, the collection of these almost semi-regular bipartite graphs and almost regular graphs completely determines the balanced JDM realization. Formally:
\begin{definition}[Labeled union]\label{def:union}
Any balanced JDM realization can be represented as a set of almost semi-regular bipartite graphs and almost regular graphs. The realization then can be constructed from these factor graphs as their {\em labeled union}: the vertices with the same labels are collapsed, and the edge set of the union is the union of the edge sets of the factor graphs.
\end{definition}

It is useful to construct the following auxiliary graphs. For each vertex class $V_i$, we create an auxiliary bipartite graph, $\G_i(V_i,U;E)$, where $U$ is a set of ``super-nodes'' representing all vertex classes $V_j$, including $V_i$. There is an edge between $v \in V_i$ and super-node $u_j$ representing vertex class $V_j$ iff
$$
    d_j(v) = \left\lceil\Theta_{i,j}\right\rceil\;,
$$
i.e., iff node $v$ carries the ceiling of the average degree of its class $i$ towards the other class $j$. (For sake of uniformity we construct these auxiliary graphs for all $i=1,\ldots,\Delta$, even, if some of them have no edge at all. Similarly, all super-nodes are given, even if some of them has no incident edge.) We claim that these $\Delta$ auxiliary graphs are \emph{half-regular}, i.e., each vertex in $V_i$ has the same degree (the degrees in the vertex class $U$ might be arbitrary). Indeed, the vertices in $V_i$ all have the same degree in the JDM realization, therefore, the number of times they have the ceiling of the average degree towards a vertex class is constant  in a balanced realization.

Let $Y$ denote the space of all balanced realizations of a JDM and just as before, let $\Delta$ denote the number of the vertex classes (some of them can be empty). We will represent the elements of $Y$ via a vector $y$  whose $\Delta(\Delta+1)/2$ components are the $\Delta$ almost regular graphs and the $\Delta(\Delta-1)/2$ almost regular bipartite graphs from their labeled union decomposition, as described in definition \ref{def:union} above. Given an element $y \in Y$ (i.e., a balanced graphical realization of the JDM) it has $\Delta$ associated auxiliary graphs ${\cal G}_i(V_i,U;E)$, one for every vertex class $V_i$ (some of them can be empty graphs). We will consider this collection of auxiliary graphs for a given $y$ as a $\Delta$-dimensional vector $x$, where $x = ({\cal G}_1,\ldots,{\cal G}_{\Delta})$.

For any given $y$ we can determine the corresponding $x$ (so no particular $y$ can correspond to two different $x$s), however, for a given $x$ there can be several $y$-s with that same $x$.  We will denote by $Y_x$ the subset of $Y$ containing all  the $y$-s with the same (given) $x$ and by $X$ the set of all possible induced $x$ vectors. Clearly, the $x$ vectors can be used to define a disjoint partition on $Y$:  $Y = \bigcup\limits_{x \in X} Y_x$. For notational convenience we will consider the space $Y$ as pairs $(x,y)$,  indicating the $x$-partition to which $y$ belongs.
This should not be confused with the notation for an edge, however, this should be evident from the context.  A restricted swap operation might fix $x$ in which case it will make a move only within $Y_x$, but if it does not fix $x$ then it will change both $x$ and $y$. For any $x$, the RSOs moving only within $Y_x$ form a Markov chain. On the other hand, tracing only the $x$s from the pairs $(x,y)$ is not a Markov chain: the probability that an RSO changes $x$ (and thus also $y$) depends also on the current $y$ not only  on $x$. However, the following theorem holds:
\begin{theorem} \label{theo:swapexist}
Let $(x_1,y_1)$ be a balanced realization of a JDM in the above mentioned  representation.
\begin{enumerate}[{\rm (i)}]
\item Assume that $(x_2,y_2)$ balanced realization is derived from the first one with one restricted swap operation. Then either $x_1 = x_2$ or they differ in exactly one coordinate, and the two corresponding auxiliary graphs differ only in one swap operation.
\item Let $x_2$ be a vector differing only in one coordinate from $x_1$, and furthermore, only in one swap within this coordinate, namely, one swap within one coordinate is sufficient to transform $x_1$ into $x_2$. Then there exists at least one $y_2$ such that $(x_2,y_2)$ is a balanced JDM realization and $(x_1,y_1)$ can be transformed into $(x_2,y_2)$ with a single RSO.
\end{enumerate}

\end{theorem}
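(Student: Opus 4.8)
The plan is to track, for a single restricted swap operation, exactly which partial degrees $d_j(\cdot)$ change, and then to translate those changes into the language of the auxiliary graphs $\G_i$. Write the RSO as deleting the edges $(p,s)$ and $(q,t)$ and inserting $(p,t)$ and $(q,s)$, where $p,q$ are the two end-vertices lying in a common class $V_i$, and $s\in V_a$, $t\in V_b$. The first thing I would record is that $s$ and $t$ merely exchange a neighbour inside $V_i$ (namely $s$ trades $p$ for $q$, and $t$ trades $q$ for $p$), so $d_i(s)$ and $d_i(t)$, and hence the entire degree spectra of $s$ and $t$, are unchanged; consequently the rows of $s$ and $t$ in every auxiliary graph stay fixed. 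Only $p$ and $q$ alter their spectra, and since both lie in $V_i$, the only auxiliary graph that can change is $\G_i$. This already localizes the effect to a single coordinate.

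For part (i), if $a=b$ then $p$ loses and gains a neighbour in the same class $V_a$, so $d_a(p)$, and every other partial degree of $p$ (and symmetrically of $q$), is unchanged, whence $x_1=x_2$. If $a\ne b$, then $d_a(p)$ drops by one while $d_b(p)$ rises by one, and $d_a(q)$ rises while $d_b(q)$ drops. Because $(x_2,y_2)$ is again balanced, each changed value must stay in $\{\lfloor\Theta_{i,\cdot}\rfloor,\lceil\Theta_{i,\cdot}\rceil\}$; a value can only move by one and remain in this two-element set when the two elements are distinct, i.e. when $\Theta_{i,a}$ and $\Theta_{i,b}$ are non-integers, and then the move is forced to be between floor and ceiling. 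Reading off the definition of $\G_i$, the decrease of $d_a(p)$ from ceiling to floor deletes the edge $(p,u_a)$, the increase of $d_b(p)$ inserts $(p,u_b)$, and symmetrically $(q,u_b)$ is deleted and $(q,u_a)$ inserted. These four changes are exactly one swap of $\G_i$ on the $V_i$-side pair $p,q$ and the super-node pair $u_a,u_b$, which proves (i).

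For part (ii) I would reverse this reading. The hypothesized single swap of $\G_i$ acts on two $V_i$-vertices $p,q$ and two super-nodes $u_a,u_b$ with $a\ne b$, turning the edges $(p,u_a),(q,u_b)$ into $(p,u_b),(q,u_a)$; its very existence forces $\Theta_{i,a}$ and $\Theta_{i,b}$ to be non-integers, so in $y_1$ we have $d_a(p)=\lceil\Theta_{i,a}\rceil$, $d_a(q)=\lfloor\Theta_{i,a}\rfloor$, $d_b(q)=\lceil\Theta_{i,b}\rceil$ and $d_b(p)=\lfloor\Theta_{i,b}\rfloor$. To build the witnessing RSO I need a vertex $w_1\in V_a$ adjacent to $p$ but not to $q$, and a vertex $w_2\in V_b$ adjacent to $q$ but not to $p$, with $w_1,w_2,p,q$ pairwise distinct; then swapping $(p,w_1),(q,w_2)$ for the non-edges $(p,w_2),(q,w_1)$ is a legal RSO whose effect is precisely the four partial-degree changes recorded above, so the resulting realization is balanced and carries auxiliary vector $x_2$. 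Existence of $w_1$ follows because $p$ has $d_a(p)-d_a(q)=1$ more neighbour in $V_a$ than $q$, so at least one neighbour of $p$ in $V_a$ is a non-neighbour of $q$; symmetrically for $w_2$, and $w_1\ne w_2$ because $a\ne b$ implies $V_a\cap V_b=\emptyset$.

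The step I expect to be delicate is this counting for $w_1$ and $w_2$ in the \emph{diagonal} case $a=i$ (or $b=i$), where $p$, $q$ and the candidate $w_1$ all live in $V_i$. There the crude inequality ``neighbours of $p$ minus neighbours of $q$'' must be sharpened to also exclude the forbidden choices $w_1\in\{p,q\}$ and to avoid manufacturing a self-loop. The saving grace is that $p\notin N_{V_i}(p)$ automatically, and that when $p\sim q$ the element $q\in N_{V_i}(p)$ is compensated by $p\in N_{V_i}(q)\setminus N_{V_i}(p)$, so a careful inclusion--exclusion still yields at least one admissible $w_1$ (and, by symmetry, $w_2$) distinct from $p$ and $q$. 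Once this distinctness bookkeeping is in place, verifying that $(p,w_2)$ and $(q,w_1)$ are genuinely non-edges and that no spectrum other than those of $p$ and $q$ is disturbed is routine, and the theorem follows.
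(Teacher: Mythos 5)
Your proposal is correct and follows essentially the same route as the paper: part (i) by tracking which partial degrees an RSO can alter, and part (ii) by using the degree gap $d_a(p)-d_a(q)=1$ (resp.\ for $b$) to extract the two witnesses $w_1,w_2$ and assemble the RSO. You are in fact somewhat more careful than the published proof, which does not explicitly address the diagonal case $a=i$ where the witness must be kept distinct from $p$ and $q$; your inclusion--exclusion argument for that case is valid.
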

\begin{proof} (i) This is just the reformulation of the definitions  for the $(x,y)$ pairs. \\
(ii) (See also Fig.~\ref{fig:auxiliary_bipartite}) By definition there is a degree $i, 1 \le i \le \Delta$ such that auxiliary graphs $x_1({\cal G}_i)$ and  $x_2({\cal G}_i)$ are different and one swap operation transforms the first one into the second one. More precisely there are vertices $v_1,v_2 \in V_i$ such that the swap transforming $x_1({\cal G}_i)$ into $x_2({\cal G}_i)$ removes edges $(v_1,U_j)$ and $(v_2,U_k)$ (with $j \neq k$)  and adds edges $(v_1,U_k)$ and $(v_2,U_j)$. (The capital letters show that the second vertices are super-vertices.) Since the edge $(v_1,U_j)$ exists in the graph $x_1(G_1)$ and $(v_2,U_j)$ does not belong to graph $x_1(G_i)$, therefore $d_j(v_1) > d_j(v_2)$ in the realization $(x_1,y_1).$ This means that there is at least one vertex $w \in V_j$ such that $w$ is connected to $v_1$ but not to $v_2$ in the realization $(x_1,y_1)$. Similarly, there is at least one vertex $r \in V_k$ such that $r$ is connected to $v_2$ but not to $v_1$ (again, in realization $(x_1,y_1)$). Therefore, we have a required RSO on nodes $v_1,v_2,w,r$. \qquad\end{proof}
\begin{figure}[h!]
\begin{center}
\begin{tikzpicture}[auto,scale=.32]
\draw [fill=cyan!15] (3,0) ellipse (5cm and 1.5cm);
\draw [fill=pink!20] (-1,6) ellipse (5cm and 1.5cm);
\draw [fill=pink!20]  (10,12) ellipse (5cm and 1.5cm);
\draw [fill=yellow!20!white,rotate=15]  (24,2) ellipse (5cm and 2cm);
\draw [fill=cyan!15] (25,0) ellipse (5cm and 1.5cm);

\draw node at (-3.5,0) {\Large V$_i$};
\draw [very thick,red] (0,0) -- (-5,6);
\draw [very thick,red] (3,0) -- (0,6);
\draw [very thick,gray!40] (0,0) -- (-4,6);
\draw [very thick,gray!40] (3,0) -- (-1.5,6);
\draw [very thick,gray!40] (6,0) -- (2,6);
\draw [very thick,gray!40] (0,0) -- (7,12);
\draw [very thick,gray!40] (3,0) -- (9.5,12);
\draw [very thick,gray!40] (6,0) -- (11.5,12);
\draw [very thick,dashed,gray] (6,0) -- (0,6);
\draw [very thick,dashed,gray] (3,0) -- (13,12);
\draw [very thick,red] (6,0) -- (13,12);
\draw [fill=black] (0,0) circle (15pt);
\draw [fill=black] (3,0) circle (15pt);
\draw [fill=black] (6,0) circle (15pt);
\draw [fill=black] (1,0) circle (3pt);
\draw [fill=black] (2,0) circle (3pt);
\draw [fill=black] (4,0) circle (3pt);
\draw [fill=black] (5,0) circle (3pt);

\draw [fill=black] (13,12) circle (15pt);
\draw [fill=black] (0,6) circle (15pt);
\draw node at (3,-1) {$v_1$};
\draw node at (5.4,-0.8) {$v_2$};
\draw node at (25,-1) {$v_1$};
\draw node at (27.4,-0.82) {$v_2$};
\draw node at (-.5,7) {$w$};
\draw node at (14,12) {$r$};

\draw [very thick,red] (22,0) -- (20,7.5);
\draw [very thick,red] (25,0) -- (20,7.5);
\draw [very thick,red] (28,0) -- (26,9);
\draw [very thick,dashed,gray] (25,0) -- (26,9);
\draw [very thick,dashed,gray] (28,0) -- (20,7.5);
\draw [fill=black] (22,0) circle (15pt);
\draw [fill=black] (25,0) circle (15pt);
\draw [fill=black] (28,0) circle (15pt);
\draw [fill=black] (23,0) circle (3pt);
\draw [fill=black] (24,0) circle (3pt);
\draw [fill=black] (26,0) circle (3pt);
\draw [fill=black] (27,0) circle (3pt);

\draw node at (-7.5,6) {\Large V$_j$};
\draw node at (3.5,12) {\Large V$_k$};
\draw node at (18.5,0) {\Large V$_i$};
\draw node at (18.8,7.4) {$U_j$};
\draw node at (24.6,9.1) {$U_k$};
\draw [fill=black] (20,7.5) circle (15pt);
\draw [fill=black] (26,9) circle (15pt);
\draw [fill=black] (2.5,8) circle (3pt);
\draw [fill=black] (3,9) circle (3pt);
\draw [fill=black] (3.5,10) circle (3pt);
\draw [fill=black] (22,7.8) circle (3pt);
\draw [fill=black] (23, 8.1) circle (3pt);
\draw [fill=black] (24,8.4) circle (3pt);
\draw [very thick,dashed,->] (15.5,12) .. controls +(2,2) and +(-2,2) .. (25.5,9.5);
\draw [very thick,dashed,->] (4.5,6.2) .. controls +(2,2) and +(-3,-3) .. (19.3,6.8);
\draw [line width=3mm,->, gray] (12,2) -- (16,2);
\end{tikzpicture}
\end{center}
\caption{Construction of the auxiliary bipartite graph $\G_i$ and a RSO
$\{(v_1,w),(v_2,r)\} \mapsto \{(v_1,r),(v_2,w)\}$ taking
$(x_1,y_1)$ into $(x_2,y_2)$.}\label{fig:auxiliary_bipartite}
\end{figure}
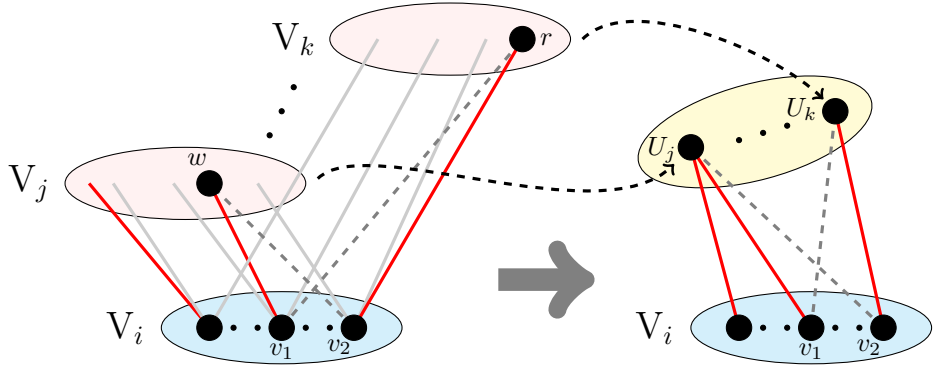

Thus any RSO on a balanced realization yielding another balanced realization either does not change $x$ or changes $x$ exactly on one coordinate (one auxiliary graph), and this change can be described with a swap,  taking one auxiliary graph into the other.

\section{Proving rapid mixing of Markov chains over factorized state spaces}\label{sec:factor}

In this section we will prove a general factorization theorem (Thm~\ref{theo:general}) on
fast convergence of a Markov chain. The proof of this theorem will lead to the proof of our
main result, Theorem \ref{theo:main}. Our theorem is similar to a theorem by Martin-Randall
and the comparison between the theorems is given at the end of this section. In the following
we will need the Cheeger inequality and a slight modification of it.

Let $M$ be a discrete time, discrete space, reversible Markov chain over set $\mathcal{I}$
with stationary distribution $\pi$ and transition probabilities from
$a$ to $b$ denoted by $T(b|a)$, where $a,b \in \mathcal{I}$.
The probability of a subset is denoted by
$$
\pi(S) := \sum_{a \in S} \pi(a)
$$
The conditional flow out of a subset of the state space $S \subset \mathcal{I}$ is defined by
\begin{equation}\label{eq:cond}
\Psi(S) := \frac{\sum\limits_{a \in S, b \in \bar{S}} \pi(a) T(b|a)} {\pi(S)}\;,
\end{equation}
where $\bar{S}$ denotes the complementary set of $S$ in $\mathcal{I}$. The conductance
of the state space is defined as
$$
\Phi := \min_{S}\left\{\Psi(S)\quad \middle| \quad S \subset \mathcal{I} \mbox{ and } 0 < \pi(S) \le \frac{1}{2}\right\}.
$$
The Cheeger inequality quoted in Theorem \ref{CIq} gives lower and upper bounds on
the second largest eigenvalue of the Markov chain, see for example the paper of Diaconis and Stroock (\cite{ds1991}) for a proof.
\begin{theorem}[Cheeger inequality]\label{theo:cheeger}
\label{CIq}
$$
1 - 2\Phi \le \lambda_2 \le 1 - \frac{\Phi^2}{2}\;.
$$
\end{theorem}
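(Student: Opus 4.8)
The plan is to prove the two inequalities separately, exploiting the reversibility of $M$. First I would record the spectral setup: since $\pi(a)T(b|a) = \pi(b)T(a|b)$, the operator $(Tf)(a) = \sum_b T(b|a)f(b)$ is self-adjoint on the space of functions on $\mathcal{I}$ equipped with the inner product $\langle f,g\rangle_\pi = \sum_a \pi(a)f(a)g(a)$, so its eigenvalues are real with $\lambda_1 = 1 \ge \lambda_2 \ge \cdots$. The main tool is the Dirichlet form $\mathcal{E}(f,f) = \langle (I-T)f,f\rangle_\pi = \frac{1}{2}\sum_{a,b}\pi(a)T(b|a)(f(a)-f(b))^2$ together with the variational identity $1-\lambda_2 = \min\{\mathcal{E}(f,f)/\mathrm{Var}_\pi(f)\}$, the minimum being over nonconstant $f$.

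The easy direction ($\lambda_2 \ge 1-2\Phi$) I would obtain from a single test function. Let $S$ attain the conductance with $\pi(S)\le \frac{1}{2}$, and insert $f = \mathbf{1}_S$ into the variational formula. A direct computation gives $\mathcal{E}(f,f) = \sum_{a\in S,\, b\in\bar S}\pi(a)T(b|a) = \Psi(S)\pi(S)$ and $\mathrm{Var}_\pi(f) = \pi(S)(1-\pi(S))$, hence $1-\lambda_2 \le \Psi(S)/(1-\pi(S)) \le 2\Phi$.

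The hard direction ($\lambda_2 \le 1 - \Phi^2/2$) is where the work lies. I would take $f$ to be the $\lambda_2$-eigenfunction and, after replacing $f$ by $-f$ if necessary, pass to the one-sided truncation $g = f^+$, whose support $\{f>0\}$ has $\pi$-measure at most $\frac{1}{2}$. Two lemmas drive the argument. \emph{(a) A comparison lemma:} $\langle g,(I-T)f\rangle_\pi \ge \mathcal{E}(g,g)$, obtained from the pointwise facts $g(a)f(a)=g(a)^2$ and $f(b)\le g(b)$; since $(I-T)f=(1-\lambda_2)f$, this yields $(1-\lambda_2)\langle g,g\rangle_\pi \ge \mathcal{E}(g,g)$. \emph{(b) A co-area and Cauchy--Schwarz estimate:} writing $|g(a)^2-g(b)^2| = |g(a)-g(b)|\,(g(a)+g(b))$ and applying Cauchy--Schwarz with the weights $\pi(a)T(b|a)$ gives $\frac{1}{2}\sum_{a,b}\pi(a)T(b|a)|g(a)^2-g(b)^2| \le \sqrt{2\,\mathcal{E}(g,g)}\,\sqrt{\langle g,g\rangle_\pi}$, while a level-set decomposition of $g^2$ bounds the same quantity from below by $\Phi\,\langle g,g\rangle_\pi$. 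Chaining (a) and (b) yields $\Phi^2/2 \le \mathcal{E}(g,g)/\langle g,g\rangle_\pi \le 1-\lambda_2$.

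The main obstacle is the co-area step in (b): one must show that the flow $\frac{1}{2}\sum_{a,b}\pi(a)T(b|a)|g(a)^2-g(b)^2|$ dominates $\Phi\,\langle g,g\rangle_\pi$. The clean way is to order the states by the values of $g$, telescope the differences $g(a)^2-g(b)^2$ across the thresholds separating them, and recognize each threshold's contribution as $\Psi$ of a superlevel set $\{g>t\}$; because every such set with $t\ge 0$ lies inside the support of $g$, it has measure at most $\frac{1}{2}$ and hence conductance at least $\Phi$. Care is needed to handle ties and to verify that the cross terms reassemble exactly into $\langle g,g\rangle_\pi = \sum_a \pi(a)g(a)^2$. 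Once this combinatorial bookkeeping is in place, the Cauchy--Schwarz half of (b) is routine, and the two directions together give the stated bounds.
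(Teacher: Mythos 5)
Your proof is correct: the test-function calculation with $f=\mathbf{1}_S$ gives the lower bound, and the truncated-eigenfunction argument (the comparison lemma for $g=f^+$, Cauchy--Schwarz, and the level-set/co-area bound using that every superlevel set of $g$ sits inside a set of measure at most $\tfrac{1}{2}$) gives the upper bound. The paper does not prove this theorem itself but defers to Diaconis and Stroock \cite{ds1991}, and your argument is essentially the standard one found there, so there is nothing to compare beyond noting that your write-up fills in a proof the authors chose to cite.
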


Here we prove a variant of the left Cheeger inequality (the lower-bound).

\begin{lemma}
\label{lemma:mod}
For any reversible Markov chain, and any subset $S$ of its state space,
\begin{equation}\label{eq:mod}
\frac{1-\lambda_2}{2} \min\{\pi(S),\pi(\bar{S})\} \le \sum_{a \in S, b \in \bar{S}} \pi(a) T(b|a)\;.
\end{equation}
\end{lemma}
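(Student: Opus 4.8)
The plan is to derive inequality \eqref{eq:mod} directly from the standard left Cheeger inequality of Theorem~\ref{CIq}, applied not necessarily to $S$ itself but to whichever of $S$ or $\bar{S}$ has probability at most $1/2$. The key observation is that the quantity $\sum_{a\in S,\,b\in\bar{S}}\pi(a)T(b|a)$ counts the total stationary flow across the cut $(S,\bar{S})$, and this flow is \emph{symmetric} in $S$ and $\bar{S}$: by reversibility, $\pi(a)T(b|a)=\pi(b)T(a|b)$, so summing over edges crossing the cut gives the same value whether we flow out of $S$ or out of $\bar{S}$. Thus the cross-flow on the right-hand side of \eqref{eq:mod} is unchanged if we replace $S$ by $\bar{S}$, and it therefore suffices to bound it below using the smaller of the two sides.

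Concretely, first I would set $S' := S$ if $\pi(S)\le 1/2$ and $S' := \bar{S}$ otherwise, so that $\pi(S')=\min\{\pi(S),\pi(\bar{S})\}\le 1/2$ and $\overline{S'}$ is the other set. By the definition of conductance in \eqref{eq:cond}, we have $\Phi \le \Psi(S')$, i.e.
$$
\Phi\,\pi(S') \le \sum_{a\in S',\,b\in\overline{S'}}\pi(a)T(b|a).
$$
Next, the left Cheeger inequality $1-2\Phi\le\lambda_2$ rearranges to $\Phi\ge (1-\lambda_2)/2$, and substituting this lower bound for $\Phi$ yields
$$
\frac{1-\lambda_2}{2}\,\pi(S') \le \sum_{a\in S',\,b\in\overline{S'}}\pi(a)T(b|a).
$$
Finally, by the reversibility argument above the cross-flow for the pair $(S',\overline{S'})$ equals the cross-flow for the original pair $(S,\bar{S})$, and $\pi(S')=\min\{\pi(S),\pi(\bar S)\}$, which is exactly \eqref{eq:mod}.

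The only point requiring a little care — and the step I expect to be the mild obstacle — is the symmetry of the cross-flow under swapping $S$ and $\bar{S}$. One must verify that $\sum_{a\in S,\,b\in\bar S}\pi(a)T(b|a)=\sum_{a\in\bar S,\,b\in S}\pi(a)T(b|a)$; this is immediate from detailed balance, $\pi(a)T(b|a)=\pi(b)T(a|b)$, applied termwise to each crossing edge. Everything else is a direct combination of the conductance lower bound $\Phi\pi(S')\le\sum\pi(a)T(b|a)$ and the algebraic rearrangement of the standard Cheeger bound, so no further estimation is needed.
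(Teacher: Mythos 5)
Your proposal is correct and follows essentially the same route as the paper's own (much terser) proof: use reversibility to make the cross-flow symmetric in $S$ and $\bar S$, reduce to the case $\pi(S)\le 1/2$, and then rearrange the left Cheeger inequality via the conductance bound $\Phi\,\pi(S')\le\sum_{a\in S',b\in\overline{S'}}\pi(a)T(b|a)$. No gaps; the degenerate case $\min\{\pi(S),\pi(\bar S)\}=0$ is trivially covered since the left-hand side then vanishes.
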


\begin{proof}
The right hand side of Equation~\ref{eq:mod} is symmetric due to the reversibility of the chain. Thus, if $\pi(S) > \frac{1}{2}$, then $S$ and $\bar{S}$ can be switched. If $\pi(S) \le \frac{1}{2}$, the inequality is simply a rearrangement of the Cheeger inequality (the left inequality in Theorem~\ref{CIq}. \quad
\end{proof}

Now we are ready to state and prove a general theorem on rapidly
mixing Markov chains.

\begin{theorem} \label{theo:general}
Let ${\cal M}$ be a class of irreducible, aperiodic, reversible Markov chains whose state space $Y$ can be partitioned into disjoint classes $Y = \cup_{x \in X} Y_x$ by the elements of some set $X$. For notational convenience we also denote the element $y \in Y_x$ via the pair $(x,y)$ to indicate the partition it belongs to. The problem size of a particular chain is denoted by $n$. Let $T$ be the transition matrix of $M \in {\cal M}$, and let $\pi$ denote the stationary distribution of $M.$ Moreover,  let $\pi_X$ denote the marginal of $\pi$ on the first coordinate that is,  $\pi_X(x)= \pi(Y_x)$ for all $x$. Also, for arbitrary but fixed $x$ let us denote by $\pi_{Y_x}$ the stationary probability distribution restricted to  $Y_x$, i.e., $\pi(y)/\pi(Y_x)$, $\forall y \in Y_x$.  Assume that the following properties hold:
\begin{enumerate}[{\rm (i)}]
\item For all $x$, the transitions  with $x$ fixed form an aperiodic, irreducible and reversible Markov chain denoted by $M_x$ with stationary distribution $\pi_{Y_x}$. This Markov chain $M_x$ has transition probabilities as Markov chain $M$ for all transitions fixing $x$, except loops, which have increased probabilities such that the transition probabilities sum up to $1$. All transitions that would change $x$ have $0$ probabilities. Furthermore, this Markov chain is rapidly mixing, i.e., for its second largest eigenvalue $\lambda_{M_x,2}$ it holds  that
    $$\frac{1}{1-\lambda_{M_x,2}} \le \poly_1(n). $$
\item There exists a Markov chain $M'$ with state space $X$ and with transition matrix $T'$   which is aperiodic, irreducible and reversible w.r.t. $\pi_X$, and   for all $x_1,y_1,x_2$ it holds that
\begin{equation}
\sum_{y_2 \in Y_{ x_{2}}} T((x_2,y_2)|(x_1,y_1)) \ge T'(x_2|x_1). \label{eq:condition}
\end{equation}
Furthermore, this Markov chain is rapidly mixing, namely, for its second largest eigenvalue $\lambda_{M',2}$ it holds that
$$
\frac{1}{1-\lambda_{M',2}} \le \poly_2(n).
$$
\end{enumerate}
Then $M$ is also rapidly mixing as its second largest eigenvalue obeys:
$$
\frac{1}{1-\lambda_{M,2}} \le \frac{256 \poly_1^2(n)  \poly_2^2(n)} {\left(1-\frac{1}{\sqrt{2}}\right)^4}
$$
\end{theorem}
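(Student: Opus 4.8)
The plan is to lower-bound the conductance $\Phi$ of $M$ directly and then feed it into the right-hand inequality of Theorem~\ref{CIq}, which gives $\frac{1}{1-\lambda_{M,2}}\le \frac{2}{\Phi^2}$; so it suffices to establish $\Phi \ge c/(\poly_1(n)\poly_2(n))$ for an absolute constant $c$, and the stated bound then follows by tracking constants. Fix an arbitrary $S\subseteq Y$ with $0<\pi(S)\le 1/2$ and write $p=\pi(S)$. For each $x$ put $S_x=S\cap Y_x$, let $f(x)=\pi_{Y_x}(S_x)$, and define the key auxiliary quantity $W=\sum_{x}\pi_X(x)\min\{f(x),1-f(x)\}$. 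I would bound the numerator $F(S)=\sum_{a\in S,\,b\in\bar S}\pi(a)T(b|a)$ of $\Psi(S)$ from below in two complementary ways, one governed by $\poly_1$ and one by $\poly_2$, and then play them against each other according to the size of $W$.

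First, the within-class estimate. Apply Lemma~\ref{lemma:mod} to the subchain $M_x$ and the set $S_x\subseteq Y_x$. Since $M_x$ agrees with $M$ on every non-loop transition and $\pi_{Y_x}(a)=\pi(a)/\pi(Y_x)$, multiplying the lemma through by $\pi(Y_x)$ gives $\frac{1-\lambda_{M_x,2}}{2}\,\pi(Y_x)\min\{f(x),1-f(x)\}\le \sum_{a\in S_x,\,b\in Y_x\setminus S_x}\pi(a)T(b|a)$. The transitions on the right stay inside $Y_x$, hence they run from $S$ into $\bar S$, and are disjoint across distinct $x$. Summing over $x$ and using $1-\lambda_{M_x,2}\ge 1/\poly_1(n)$ from hypothesis (i) yields the first bound $F(S)\ge W/(2\poly_1(n))$.

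Second, the between-class estimate. Fix a threshold and set $B=\{x:f(x)>1/2\}$ and $U=\bigcup_{x\in B}Y_x$. Apply Lemma~\ref{lemma:mod} to $M'$ and $B\subseteq X$; hypothesis (ii) gives $\frac{1}{2\poly_2(n)}\min\{\pi_X(B),1-\pi_X(B)\}\le \sum_{x_1\in B,\,x_2\in\bar B}\pi_X(x_1)T'(x_2|x_1)$. Now the domination inequality~(\ref{eq:condition}), combined with $\pi_X(x_1)=\sum_{y_1}\pi((x_1,y_1))$, lets me replace each $\pi_X(x_1)T'(x_2|x_1)$ by a lower-bounding sum of genuine $M$-transitions from $Y_{x_1}$ into $Y_{x_2}$; hence the right-hand side is at most the $M$-cut $Q(U,\bar U):=\sum_{a\in U,\,b\in\bar U}\pi(a)T(b|a)$. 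Finally I compare this with $F(S)=Q(S,\bar S)$: moving a single state across the boundary changes the cut functional by at most that state's stationary mass, so $|Q(U,\bar U)-Q(S,\bar S)|\le \pi(S\triangle U)$, and with this threshold one checks $\pi(S\triangle U)\le 2W$ and $\min\{\pi_X(B),1-\pi_X(B)\}\ge p-2W$. Combining gives the second bound $F(S)\ge \frac{p-2W}{2\poly_2(n)}-2W$.

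It then remains to balance the two estimates by a dichotomy on $W$. If $W\ge \beta p/\poly_2(n)$, the within-class bound already gives $\Psi(S)=F(S)/p\ge \beta/(2\poly_1(n)\poly_2(n))$; otherwise the between-class bound gives $\Psi(S)\ge (1-6\beta)/(2\poly_2(n))$, and for a small enough absolute constant $\beta$ both cases are at least $c/(\poly_1(n)\poly_2(n))$. Re-optimizing the threshold defining $B$ together with the dichotomy cutoff is what sharpens the absolute constant to the explicit $256/(1-\frac{1}{\sqrt2})^4$ of the statement. I expect the between-class comparison to be the main obstacle: inequality~(\ref{eq:condition}) lower-bounds only the \emph{total} mass that $M$ sends from $(x_1,y_1)$ into the whole class $Y_{x_2}$, not the mass that actually lands in $\bar S$, so boundary flow cannot be read off from $M'$ directly. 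Approximating $S$ by the union $U$ of whole partition classes and absorbing the discrepancy into $\pi(S\triangle U)\le 2W$ — precisely the quantity the within-class bound already controls — is the device that couples the two estimates, and arranging the threshold and the dichotomy constant so that both cases close simultaneously is the delicate point.
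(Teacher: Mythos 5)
Your proposal is correct, and while it shares the paper's overall architecture --- a conductance lower bound fed into the Cheeger inequality, with Lemma~\ref{lemma:mod} applied once to the restricted chains $M_x$ and once to the projection chain $M'$, combined through a two-case analysis --- the coupling of the two estimates is done by a genuinely different device. The paper splits $X$ at the threshold $1/\sqrt{2}$ into a lower and an upper part, dichotomizes on the ratio $\pi(S_l)/\pi(S_u)$, and in the ``negligible $S_l$'' case argues via reversibility that at most half of the inter-class flow leaving $S_u$ can be reabsorbed by $S_l$. You instead round $S$ to the union $U$ of the whole classes in which $S$ holds a majority, control the discrepancy by the Lipschitz property of the cut functional, $|Q(U,\bar U)-Q(S,\bar S)|\le\pi(S\triangle U)$ (your reversibility step in bounding the effect of moving one state is exactly right), and observe that $\pi(S\triangle U)$ equals the unbalancedness $W=\sum_x\pi_X(x)\min\{f(x),1-f(x)\}$ --- the very quantity the within-class estimate bounds from below --- so the dichotomy on $W/\pi(S)$ closes both cases. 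All the individual steps check out: the within-class bound $F(S)\ge W/(2\poly_1(n))$ is valid because $M_x$ and $M$ agree on non-loop transitions inside $Y_x$; the pointwise condition~(\ref{eq:condition}) does give $\sum_{x_1\in B,x_2\notin B}\pi_X(x_1)T'(x_2|x_1)\le Q(U,\bar U)$; and $\min\{\pi_X(B),1-\pi_X(B)\}\ge\pi(S)-W$. Your approach buys a cleaner, more symmetric argument and in fact a \emph{better} constant than the paper's (with $\beta=1/12$ you get $\Phi\ge 1/(24\,\poly_1(n)\poly_2(n))$, hence a relaxation-time bound of order $1152\,\poly_1^2\poly_2^2$, well inside $256\poly_1^2\poly_2^2/(1-1/\sqrt2)^4$), so no ``re-optimization of the threshold'' is actually needed to recover the stated bound --- the claimed inequality follows a fortiori; the only housekeeping to add is the harmless normalization $\poly_1(n),\poly_2(n)\ge 1$, which the paper also assumes for $\poly_2$.
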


\begin{proof}
For any non-empty subset $S$ of the state space $Y=\bigcup\limits_x Y_x$ of $M$ we define
$$
X(S) := \{x \in X \ |\ \exists y, (x,y)\in S\}
$$
and for any given $x\in X$ we have
$$
Y_x(S) := \{(x,y)\in Y \ | \  (x,y) \in S\} = Y_x \cap S.
$$
We are going to prove that the conditional flow $\Psi (S)$ (see equation (\ref{eq:cond})) from any $S\subset Y$ with $0 < \pi(S) \le 1/2$ cannot be too small and therefore, neither the conductance of the Markov chain will be small. We cut the state space into two parts $Y= Y^l \cup Y^u$, namely the lower and upper parts using the following definitions (see also Fig.~\ref{fig:slsu_def}): the partition $X= L \cup U$ is defined as
\begin{eqnarray*}
L &:=& \left\{x \in X \, \middle| \frac{\pi(Y_x(S))}{\pi(Y_x)} \le 1/\sqrt{2} \right\}, \\
U &:=& \left\{x \in X \,  \middle| \frac{\pi(Y_x(S))}{\pi(Y_x)} > 1/\sqrt{2} \right\}\,.
\end{eqnarray*}
Furthermore, we introduce:
$$
Y^l := \bigcup_{x\in L} Y_x \quad \mbox{and}\quad Y^u := \bigcup_{x\in U} Y_x\,,
$$
and finally let
$$
S_l := S \cap Y^l \quad \mbox{and} \quad S_u := S \cap Y^u.
$$

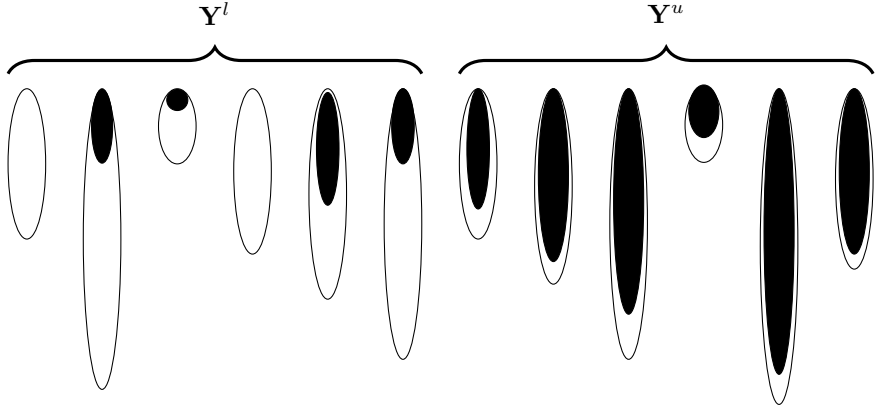
\begin{figure}[h]
\begin{center}
\begin{tikzpicture}
\pgfsnakesegmentamplitude=10pt
\draw [snake=brace, very thick] (0,5.2) -- (5.5,5.2);
\node at (2.75, 6) {$\mathbf{Y}^l$};
\draw (.25,4) ellipse (.25 cm and 1cm);
\draw (1.25,3) ellipse (.25 cm and 2cm);
\draw [fill=black] (1.25,4.5) ellipse (.14 cm and .49cm);
\draw (2.25,4.5) ellipse (.25 cm and .5cm);
\draw [fill=black] (2.25,4.85) circle (.14 cm);
\draw (3.25,3.9) ellipse (.25 cm and 1.1cm);
\draw (4.25,3.6) ellipse (.25 cm and 1.4cm);
\draw [fill=black] (4.25,4.2) ellipse (.15 cm and .75cm);
\draw (5.25,3.2) ellipse (.25 cm and 1.8cm);
\draw [fill=black] (5.25,4.5) ellipse (.15 cm and .5cm);
\draw [snake=brace, very thick] (6,5.2) -- (11.5,5.2);
\node at (8.75, 6) {$\mathbf{Y}^u$};
\draw (6.25,4) ellipse (.25 cm and 1cm);
\draw [fill=black] (6.25,4.2) ellipse (.15 cm and .8cm);
\draw (7.25,3.7) ellipse (.25 cm and 1.3cm);
\draw [fill=black] (7.25,3.85) ellipse (.2 cm and 1.15cm);
\draw (8.25,3.2) ellipse (.25 cm and 1.8cm);
\draw [fill=black] (8.25,3.5) ellipse (.2 cm and 1.5cm);
\draw (9.25,4.52) ellipse (.25 cm and .5cm);
\draw [fill=black] (9.25,4.7) ellipse (.2 cm and .35cm);
\draw (10.25,2.9) ellipse (.25 cm and 2.1cm);
\draw [fill=black] (10.25,3.1) ellipse (.2 cm and 1.9cm);
\draw (11.25,3.8) ellipse (.25 cm and 1.2cm);
\draw [fill=black] (11.25,3.9) ellipse (.2 cm and 1.1cm);
\end{tikzpicture}
\end{center}
\caption{The structure of $Y=Y^l \cup Y^u$.
A non-filled ellipse (with a simple line boundary) represents the
space $Y_x$ for a given $x$. The solid black ellipses represent the set $S$ with
some of them
(the $S_l$) belonging to the lower part $Y^l$, and the rest (the $S_u$)
belonging to the upper part ($Y^u$).}\label{fig:slsu_def}
\end{figure}

\noindent Since $M'$ is rapidly mixing we can write (based on Theorem \ref{CIq}):
$$
1-2\Phi_{M'} \le \lambda_{M',2} \le 1- \frac{1}{\poly_2(n)}\;,
$$
or
$$
\Phi_{M'} \ge \frac{1}{2\poly_2(n)}\;.
$$
We use this lower bound of conductance to define two cases regarding
the lower and upper part of $S$. Without loss of generality, we can
assume that $\poly_2(n) > 1$ for all positive $n$, a condition what we
need later on for technical reasons.
\begin{enumerate}[{\bf Case} 1]
\item We say that the lower part $S_l$ is not a negligible part of $S$
when
\begin{equation}
\frac{\pi(S_l)}{\pi(S_u)} \ge
\frac{1}{4\sqrt{2}\poly_2(n)}\left(1-\frac{1}{\sqrt{2}}\right). \label{eq:non-negligible}
\end{equation}
\item  We say that  the lower part $S_l$ is a negligible part of $S$ when
\begin{equation}
\frac{\pi(S_l)}{\pi(S_u)} < \frac{1}{4\sqrt{2}\poly_2(n)}
\left(1-\frac{1}{\sqrt{2}}\right). \label{eq:negligible}
\end{equation}
\end{enumerate}
Our plan is the following: the conditional flow $\Psi (S)$ is positive
on any non-empty subset
and it obeys:
$$
\Psi(S) = \Psi'(S_l) \frac{\pi(S_l)}{\pi(S)}+ \Psi'(S_u)\frac{\pi(S_u)}{\pi(S)},
$$
where
$$
\Psi'(S_l):=\frac{1}{\pi(S_l)}\sum_{x\in S_l, y\in \bar{S}} \pi(x) T(y|x)\quad \mbox{ and }\quad \Psi'(S_u) := \frac{1}{\pi(S_u)}\sum\limits_{x\in S_u, y\in \bar{S}} \pi(x) T(y|x).
$$
In other words, $\Psi'(S_l)$ and $\Psi'(S_u)$ are defined as the flow going from $S_l$ and $S_u$ and leaving $S$.

The value $\Psi(S)$ cannot be too small, if at least one of $\Psi'(S_l)$ or $\Psi' (S_u)$ is big enough (and the associated fraction $\pi(S_{l})/\pi(S)$ or $\pi(S_{u})/\pi(S)$). In Case 1 we will show that  $\Psi' (S_l)$ itself is big enough. To that end it will be sufficient to consider the part which leaves $S_l$ but not $Y^l$ (this guarantees that it goes out of $S$, see also Fig.~\ref{fig:case1}). For Case 2
we will consider $\Psi' (S_u),$ particularly that part of it which  goes from $S_u$ to $Y^l \setminus S_l$ (and then going out of $S$, not only $S_u$, see also Fig.~\ref{fig:case2}).

\begin{figure}[h]
\begin{center}
\begin{tikzpicture}
\begin{scope}[>=latex]
\pgfsnakesegmentamplitude=10pt
\draw [snake=brace, very thick] (0,5.2) -- (5.5,5.2);
\node at (2.75, 6) {$\mathbf{Y}^l$};
\draw (.25,4) ellipse (.25 cm and 1cm);
\draw (1.25,3) ellipse (.25 cm and 2cm);
\draw [fill=black] (1.25,4.5) ellipse (.13 cm and .49cm);
\draw [->] (1.25,4) -- (1.25, 2.1);
\draw (2.25,4.5) ellipse (.25 cm and .5cm);
\draw [fill=black] (2.25,4.85) ellipse (.12 cm and .15cm);
\draw [->] (2.25,4.7) -- (2.25, 4.3);
\draw (3.25,3.9) ellipse (.25 cm and 1.1cm);
\draw (4.25,3.6) ellipse (.25 cm and 1.4cm);
\draw [fill=black] (4.25,4.2) ellipse (.15 cm and .75cm);
\draw [->] (4.25,3.5) -- (4.25, 2.3);
\draw (5.25,3.2) ellipse (.25 cm and 1.8cm);
\draw [fill=black] (5.25,4.5) ellipse (.15 cm and .5cm);
\draw [->] (5.25,4) -- (5.25, 3);
\draw [snake=brace, very thick] (6,5.2) -- (11.5,5.2);
\node at (8.75, 6) {$\mathbf{Y}^u$};
\draw (6.25,4) ellipse (.25 cm and 1cm);
\draw [fill=black] (6.25,4.2) ellipse (.15 cm and .8cm);
\draw (7.25,3.7) ellipse (.25 cm and 1.3cm);
\draw [fill=black] (7.25,3.85) ellipse (.2 cm and 1.15cm);
\draw (8.25,3.2) ellipse (.25 cm and 1.8cm);
\draw [fill=black] (8.25,3.5) ellipse (.2 cm and 1.5cm);
\draw (9.25,4.52) ellipse (.25 cm and .5cm);
\draw [fill=black] (9.25,4.7) ellipse (.2 cm and .3cm);
\draw (10.25,2.9) ellipse (.25 cm and 2.1cm);
\draw [fill=black] (10.25,3.1) ellipse (.2 cm and 1.9cm);
\draw (11.25,3.8) ellipse (.25 cm and 1.2cm);
\draw [fill=black] (11.25,3.9) ellipse (.2 cm and 1.1cm);
\end{scope}
\end{tikzpicture}
\end{center}
\caption{When $S_l$ is not a negligible part of $S$, there is a
  considerable flow going out from $S_l$ to within $Y^l$, implying
   that the conditional flow going out from $S$ cannot be small. See text for
  details and rigorous calculations.}\label{fig:case1}
\end{figure}
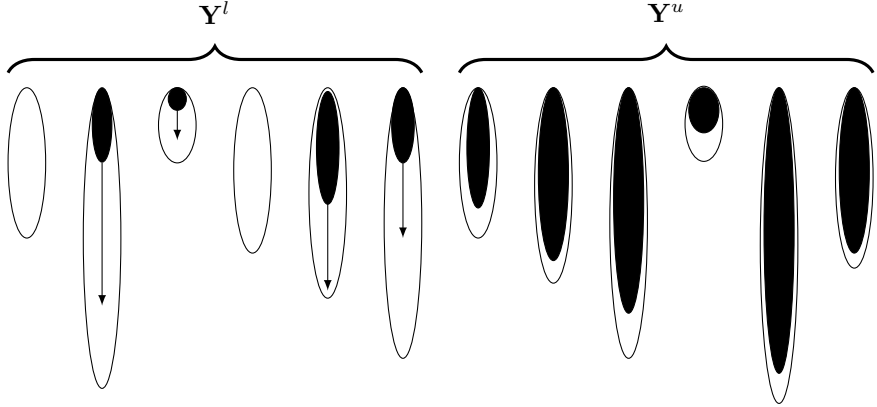

\medskip\noindent
In {\bf Case} 1, the flow going out from $S_l$ within $Y^l$ is sufficient to prove
that the conditional flow going out from $S$ is not negligible.
We know that for any particular $x$, we have a
rapidly mixing Markov chain $M_x$ over the second coordinate $y$. Let their
smallest conductance be denoted by $\Phi_X$. Since all these
Markov chains are rapidly mixing, we have that
$$
\max_{x} \lambda_{M_x,2} \le 1- \frac{1}{\poly_1(n)}
$$
or, equivalently:
$$
\Phi_X \ge \frac{1}{2\poly_1(n)}.
$$
However, in the lower part, for any particular $x$ one has:
$$
\pi_{Y_x}(Y_x(S)) = \frac{\pi(Y_x(S))}{\pi(Y_x)} \le \frac{1}{\sqrt{2}}
$$
so for any fixed $x$ belonging to $L$ it holds that
\begin{eqnarray*}
\frac{1}{2\poly_1(n)}\min\left\{\pi_{Y_x}(Y_x(S)),\left(1-\frac{1}{\sqrt{2}}\right)\right\} \le &&
 \\
\le \sum_{(x,y) \in S, (x,y') \in \bar{S}} \pi_{Y_x}((x,y)) T((x,y')|(x,y)) &&
\end{eqnarray*}
using the modified Cheeger inequality (Lemma~\ref{lemma:mod}). Observing that
$$
\pi_{Y_x}((x,y)) = \frac{\pi((x,y))}{\pi(Y_x)}\;,
$$
we obtain:
\begin{eqnarray*}
\frac{1}{2\poly_1(n)}\pi(Y_x(S))\left(1-\frac{1}{\sqrt{2}}\right)
&\le & \frac{1}{2\poly_1(n)}\min\left\{\pi(Y_x(S)),\pi(Y_x)\left(1-\frac{1}{\sqrt{2}}\right)\right\} \le\\
&\le& \sum_{(x,y) \in S, (x,y') \in \bar{S}} \pi((x,y)) T((x,y')|(x,y))\;.
\end{eqnarray*}
Summing this for all the $x$s belonging to $L$, we deduce that
\begin{eqnarray*}
\pi(S_l) \frac{1}{2\poly_1(n)}\left(1-\frac{1}{\sqrt{2}}\right) \le \\
\sum_{x|Y_x(S) \subseteq S_l}\left( \sum_{(x,y) \in S, (x,y') \in \bar{S}} \pi((x,y))
T((x,y')|(x,y)) \right ).\label{eq:justbefore}
\end{eqnarray*}
Note that the flow on the right hand side of Equation~\ref{eq:justbefore} is not only going out from $S_l$ but also from the
entire $S$. Therefore, we have that
$$
\Psi(S) \ge \frac{\pi(S_l)}{\pi(S)} \times \frac{1}{2\poly_1(n)}\left(1-\frac{1}{\sqrt{2}}\right).
$$
Either $\pi(S_l) \le \pi(S_u)$, which then yields
$$\frac{\pi(S_l)}{\pi(S)} = \frac{\pi(S_l)}{\pi(S_l) + \pi(S_u)} \ge
\frac{\pi(S_l)}{2\pi(S_u)} \ge \frac{1}{8\sqrt{2}\poly_2(n)} \left(1-\frac{1}{\sqrt{2}}\right)  $$
after using Equation~\ref{eq:non-negligible}, or
$\pi(S_l) > \pi(S_u)$, in which case we have
$$\frac{\pi(S_l)}{\pi(S)} > \frac{1}{2} \ge \frac{1}{8\sqrt{2}\poly_2(n)}
\left(1-\frac{1}{\sqrt{2}}\right). $$
(Note that $\poly_2(n) > 1$.) Thus in both cases the following inequality holds:
$$
\Psi(S) \ge \frac{1}{8\sqrt{2}\poly_2(n)}\left(1-\frac{1}{\sqrt{2}}\right) \times
\frac{1}{2\poly_1(n)}\left(1-\frac{1}{\sqrt{2}}\right).
$$

\bigskip\noindent
In {\bf Case} 2, the lower part of $S$ is a negligible part of $S$.
\begin{figure}[h!]
\begin{center}
\begin{tikzpicture}
\begin{scope}[>=latex]
\pgfsnakesegmentamplitude=10pt
\draw [snake=brace, very thick] (0,5.2) -- (5.5,5.2);
\node at (2.75, 6) {$\mathbf{Y}^l$};
\draw (.25,4) ellipse (.25 cm and 1cm);
\draw [->] (10.25,4) -- (.25,4);
\draw (1.25,3) ellipse (.25 cm and 2cm);
\draw [fill=black] (1.25,4.7) ellipse (.10 cm and .29cm);
\draw (2.25,4.5) ellipse (.25 cm and .5cm);
\draw [fill=black] (2.25,4.85) circle (.14 cm);
\draw (3.25,3.9) ellipse (.25 cm and 1.1cm);
\draw [->] (8.25,3) -- (3.25,3);
\draw (4.25,3.6) ellipse (.25 cm and 1.4cm);
\draw [->] (6.25,4.2) -- (4.25,4.2);
\draw [->] (11.25,3.5) -- (4.25,3.5);
\draw (5.25,3.2) ellipse (.25 cm and 1.8cm);
\draw [fill=black] (5.25,4.7) ellipse (.125 cm and .28cm);
\draw [->] (9.25,4.75) -- (5.1,4.1);
\draw [->] (7.25,3.8) -- (5.25,3.8);
\draw [snake=brace, very thick] (6,5.2) -- (11.5,5.2);
\node at (8.75, 6) {$\mathbf{Y}^u$};
\draw (6.25,4) ellipse (.25 cm and 1cm);
\draw [fill=black] (6.25,4.2) ellipse (.15 cm and .8cm);
\draw (7.25,3.7) ellipse (.25 cm and 1.3cm);
\draw [fill=black] (7.25,3.85) ellipse (.2 cm and 1.15cm);
\draw (8.25,3.2) ellipse (.25 cm and 1.8cm);
\draw [fill=black] (8.25,3.5) ellipse (.2 cm and 1.5cm);
\draw (9.25,4.52) ellipse (.25 cm and .5cm);
\draw [fill=black] (9.25,4.7) ellipse (.2 cm and .3cm);
\draw (10.25,2.9) ellipse (.25 cm and 2.1cm);
\draw [fill=black] (10.25,3.1) ellipse (.2 cm and 1.9cm);
\draw (11.25,3.8) ellipse (.25 cm and 1.2cm);
\draw [fill=black] (11.25,3.9) ellipse (.2 cm and 1.1cm);
\end{scope}
\end{tikzpicture}
\end{center}
\caption{When $S_l$ is a negligible part of $S$, there is a   considerable
flow going out from $S_u$ into $Y^l \setminus S_l$. See text for  details
and rigorous calculations.}\label{fig:case2}
\end{figure}
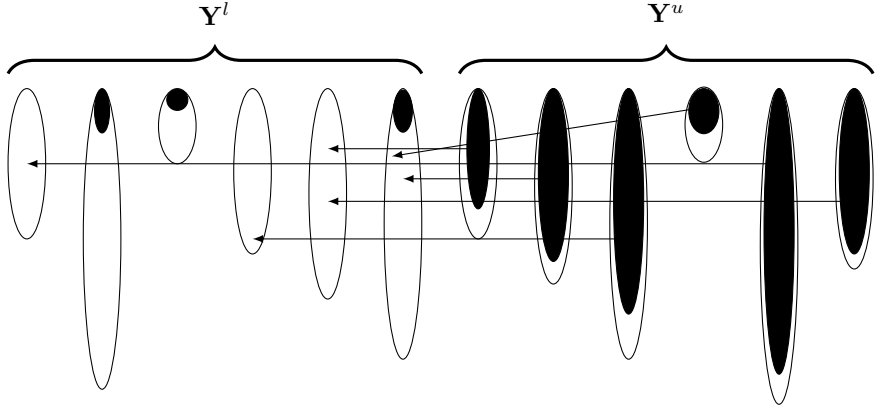
We have that
$$
\pi_X(X(S_u)) \le \frac{1}{\sqrt{2}}
$$
otherwise $\pi(S_u) > 1/2$ would happen (due to the definition of the upper part), and then $\pi(S) > 1/2$, a contradiction.

Hence in the Markov chain $M'$, based on the Lemma~\ref{lemma:mod}, we obtain for $X(S_u)$ that
\begin{equation}\label{eq:uppCheager}
\frac{1}{2\poly_2(n)}\min\left\{\pi_X(X(S_u)),\left(1-\frac{1}{\sqrt{2}}\right)\right\} \le \sum_{{ x'
\in \overline{X(S_u)} \atop x \in X(S_u)}} \pi_X(x) T'(x'|x).
\end{equation}
For all $y$ for which $(x,y) \in S_u$, due to Equation~(\ref{eq:condition}), we can write:
$$
T'(x'|x) \le \sum_{y'} T((x',y')|(x,y))\;.
$$
Multiplying this with $\pi((x,y))$ then summing for all suitable $y$:
$$
\pi(Y_x(S)) T'(x'|x) \le \sum_{y|(x,y) \in S_u} \sum_{y'} \pi((x,y)) T((x',y')|(x,y))
$$
(note that $x \in U$ and thus $Y_x(S) = Y_x(S_u)$) and thus
$$
T'(x'|x) \le \frac{\sum_{y|(x,y) \in S_u} \sum_{y'} \pi((x,y)) T((x',y')|(x,y))}{\pi(Y_x(S)) }.
$$
Inserting this into Equation~\ref{eq:uppCheager}, we find that
\begin{eqnarray*}
\frac{1}{2\poly_2(n)}\min\left\{\pi_X(X(S_u)),\left(1-\frac{1}{\sqrt{2}}\right)\right\}
\le   \\
\le \sum_{x \in X(S_u), x' \in \overline{X(S_u)}} \frac{\pi_X(x)}{\pi(Y_x(S))}
\sum_{y|(x,y) \in S_u} \sum_{y'} \pi((x,y)) T((x',y')|(x,y)).
\end{eqnarray*}
Recall, that $\pi_X(x) = \pi(Y_x)$, and thus $\frac{\pi_X(x)}{\pi(Y_x(S))} \le \sqrt{2}$ for all $x \in X(S_u)$. Therefore we can write that
\begin{eqnarray*}
\frac{1}{2\poly_2(n)}\min\left\{\pi_X(X(S_u)),\left(1-\frac{1}{\sqrt{2}}\right)\right\} \le  \\
\sqrt{2}\sum_{(x,y) \in S_u}\left(  \sum_{(x',y')|x' \in \overline{X(S_u)}} \pi((x,y)) T((x',y')|(x,y)) \right).
\end{eqnarray*}
Note that $\pi(S_u) \le \pi_X(X(S_u)) <1$, and since both items in the minimum taken in the LHS are smaller than 1, their product will be smaller than any of them. Therefore we have
\begin{eqnarray*}
&& \frac{1}{2\sqrt{2}\poly_2(n)} \pi(S_u) \left(1-\frac{1}{\sqrt{2}}\right) \le \\
&& \le \sum_{(x,y) \in S_u}\left( \sum_{(x',y')|x' \in \overline{X(S_u)}} \pi((x,y)) T((x',y')|(x,y)) \right ).
\end{eqnarray*}
This flow is going out from $S_u$, and it is so large that at most half of it can be  picked up by the lower part of $S$ (due to reversibility and due to Equation~\ref{eq:negligible}), and thus the remaining part, i.e., at least half of the flow must go out of $S$. Therefore:
$$
\frac{\pi(S_u)}{\pi(S)} \times \frac{1}{4\sqrt{2}\poly_2(n)} \left(1-\frac{1}{\sqrt{2}}\right) \le \Psi(S) \;.
$$
However, since $S_u$ dominates $S$, namely, $\pi(S_u) > \frac{\pi(S)}{2}$,
we have that
$$
\frac{1}{8\sqrt{2}\poly_2(n)}\left(1-\frac{1}{\sqrt{2}}\right) \le \Psi(S).
$$
Comparing the bounds from Case 1 and Case 2, for all $S$ satisfying $0 < \pi(S) \le \frac{1}{2}$, we can write:
$$
\frac{1}{16\sqrt{2}\poly_2(n) \poly_1(n)}\left(1-\frac{1}{\sqrt{2}}\right)^2 \le \Psi(S).
$$
And thus, for the conductance of the Markov chain $M$ (which is the minimum over all possible $S$)
$$
\frac{1}{16\sqrt{2}\poly_2(n) \poly_1(n)}\left(1-\frac{1}{\sqrt{2}}\right)^2 \le \Phi_M.
$$
Applying this to the Cheeger inequality, one obtains
$$
\lambda_{M,2} \le 1- \frac{\left(\frac{1}{16\sqrt{2}\poly_2(n)  \poly_1(n)}\left(1-\frac{1}{\sqrt{2}}\right)^2\right)^2}{2}
$$
and thus
$$
\frac{1}{1-\lambda_{M,2}} \le \frac{256 \poly_1^2(n) \poly_2^2(n)}{\left(1-\frac{1}{\sqrt{2}}\right)^4}
$$
which is what we wanted to prove.
\qquad\end{proof}

Martin and Randall \cite{MR06} have developed a similar theorem. They assume a disjoint decomposition of the state space $\Omega$ of an irreducible and reversible Markov chain defined via the transition probabilities $P(y|x)$. They require that the Markov chain be rapidly mixing when restricted onto each partition $\Omega_i$ ($\Omega = \cup_i \Omega_i$) and furthermore, another Markov chain, the so-called projection Markov chain $\overline{P}(i|j)$ defined over the indices of the partitions be also rapidly mixing. If all these hold, then the original Markov chain is also rapidly mixing. For the projection Markov chain they use the normalized conditional flow
\begin{equation}
\overline{P}(j|i) = \frac{1}{\pi(\Omega_i)} \sum_{x\in\Omega_i, y\in \Omega_j} \pi(x) P(y|x) \label{MR}
\end{equation}
as transition probabilities. This can be interpreted as a weighted average transition
probability between two partitions, while in our case, Equation~(\ref{eq:condition}) requires only that the transition probability of the lower bounding Markov chain is not more than the minimum of the sum of the transition probabilities going out from one member of the partition (subset $Y_{x_1}$) to the other member of the partition (subset $Y_{x_2}$) with the minimum taken over all the elements of $Y_{x_1}$. Obviously, it is a stronger condition that our Markov chain must be rapidly mixing, since a Markov chain is mixing slower when each transition probability between any two states is smaller. (The latter statement is based on a comparison theorem by Diaconis and Saloff-Coste \cite{DSC93}.)  Therefore, from that point of view, our theorem is weaker. On the other hand, the average transition probability (Equation~(\ref{MR})) is usually hard to calculate, and in this sense our theorem is more applicable. Note that Martin and Randall have also resorted in the end to using chain comparison techniques (Sections 2.2 and 3 in their paper) employing a Metropolis-Hastings chain as a lower bounding chain instead of the projection chain above. Our theorem, however, provides a direct proof of a similar statement.

\section{The RSO Markov chain on balanced realizations}\label{sec:RSO}
In this section we will apply Theorem~\ref{theo:general} to prove that the RSO Markov chain is rapidly mixing on the balanced JDM realizations. We partition its state space according to the vectors $x$ of the auxiliary graph collections (see Definition \ref{def:union} and its explanations). The following result will be used to prove that all derived (marginal) Markov chains $M_x$ are rapidly mixing.
\begin{theorem}
\label{theo:directproduct}
Let ${\cal M}$ be a class of Markov chains whose state space is a $K$ dimensional direct product of spaces, and the problem size of a particular chain is denoted by $n$ $($where we assume that $K=O(poly_1(n))).$

Any transition of the Markov chain $M \in {\cal M}$ changes only one coordinate (each coordinate with equal probability), and the transition probabilities do not depend on the other coordinates. The transitions on each coordinate form irreducible, aperiodic Markov chains (denoted by $M_1, M_2, \ldots M_K$), which are reversible with respect to a distribution $\pi_i.$ Furthermore, each of $M_1, \ldots M_K$ are  rapidly mixing, i.e., with the relaxation time $\frac{1}{1-\lambda_{2,i}}$ being bounded by a $O(\mathrm{poly}_2 (n))$ for all $i$. Then the Markov chain $M$ converges rapidly to the direct product of the $\pi_i$ distributions, and the second largest eigenvalue of $M$ is
$$
\lambda_{2,M} = \frac{K-1+\max_{i}\left\{ \lambda_{2,i}\right\}}{K}
$$
and thus the relaxation time of $M$ is also polynomially bounded:
$$
\frac{1}{1- \lambda_{2,M}} = O(\mathrm{poly}_1(n) \mathrm{poly}_2(n)).
$$
\end{theorem}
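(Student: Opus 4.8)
The plan is to diagonalize the one-step operator of $M$ directly, exploiting the product structure. Write the transition operator as $T=\frac{1}{K}\sum_{i=1}^{K}P_i$, where $P_i$ applies the transition matrix $T_i$ of $M_i$ to the $i$-th coordinate and acts as the identity on all other coordinates. Since $M$ selects a coordinate uniformly and updates only that coordinate, and since the update on coordinate $i$ ignores the remaining coordinates, the product measure $\pi=\pi_1\otimes\cdots\otimes\pi_K$ is stationary and $T$ is reversible with respect to it (the detailed-balance identity factorizes, the off-coordinate factors being symmetric in the two states). Hence each $P_i$ is self-adjoint on $\ell^2(\pi)$. The structural fact I would rely on is that the $P_i$ act on disjoint tensor factors and therefore pairwise commute, so they can be simultaneously diagonalized.

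First I would fix, for each $i$, a real $\pi_i$-orthonormal eigenbasis $f^{(i)}_1,\dots,f^{(i)}_{|\Omega_i|}$ of $T_i$ with eigenvalues $1=\lambda_{1,i}\ge\lambda_{2,i}\ge\cdots$, which exists because $M_i$ is reversible and hence self-adjoint on $\ell^2(\pi_i)$. The tensor products $f^{(1)}_{j_1}\otimes\cdots\otimes f^{(K)}_{j_K}$ then form a $\pi$-orthonormal eigenbasis of the whole space, and a one-line computation gives
\[
T\bigl(f^{(1)}_{j_1}\otimes\cdots\otimes f^{(K)}_{j_K}\bigr)=\Bigl(\tfrac{1}{K}\textstyle\sum_{i=1}^{K}\lambda_{j_i,i}\Bigr)\,f^{(1)}_{j_1}\otimes\cdots\otimes f^{(K)}_{j_K},
\]
since $P_i$ multiplies such a product by $\lambda_{j_i,i}$ and leaves every other factor untouched. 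Thus the spectrum of $T$ is exactly the set of averages $\frac{1}{K}\sum_i\lambda_{j_i,i}$ taken over all index choices, and this enumerates all eigenvalues because the tensor basis spans $\ell^2(\pi)$.

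It remains to extract the second largest eigenvalue. The top value $1$ is attained by taking $j_i=1$ for every $i$ (the constant eigenvector). For any other choice put $A=\{i:j_i\neq 1\}\neq\varnothing$; using $\lambda_{j_i,i}\le\lambda_{2,i}\le 1$ for $i\in A$ and $\lambda_{j_i,i}=1$ for $i\notin A$, I would bound $\sum_i\lambda_{j_i,i}\le (K-|A|)+\sum_{i\in A}\lambda_{2,i}\le K-1+\max_i\lambda_{2,i}$, where the last step peels off the largest term $\max_{i\in A}\lambda_{2,i}\le\max_i\lambda_{2,i}$ and bounds the remaining $|A|-1$ terms by $1$ each. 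Equality holds when only the single coordinate maximizing $\lambda_{2,i}$ is flipped to its second eigenvalue, so $\lambda_{2,M}=\frac{K-1+\max_i\lambda_{2,i}}{K}$, precisely as claimed. Consequently $1-\lambda_{2,M}=\frac{1-\max_i\lambda_{2,i}}{K}=\frac{\min_i(1-\lambda_{2,i})}{K}$, and therefore $\frac{1}{1-\lambda_{2,M}}=K\,\max_i\frac{1}{1-\lambda_{2,i}}=O(\poly_1(n)\poly_2(n))$, using $K=O(\poly_1(n))$ together with the per-coordinate bound $\frac{1}{1-\lambda_{2,i}}=O(\poly_2(n))$.

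The only genuinely substantive step is the optimization isolating the second largest eigenvalue, i.e.\ confirming that perturbing two or more coordinates can never beat perturbing the single best one; this reduces to the elementary inequality $\sum_{i\in A}\lambda_{2,i}-\max_i\lambda_{2,i}\le |A|-1$, which rests only on $\lambda_{2,i}\le 1$. Everything else is the standard tensorization of commuting self-adjoint operators. I would also note explicitly that $\lambda_{2,M}$ is understood as the second largest eigenvalue in the algebraic sense used throughout the paper for the relaxation time $\frac{1}{1-\lambda_2}$, so that the stated identity requires no separate control of a possibly negative bottom of the spectrum.
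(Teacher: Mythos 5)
Your proof is correct and follows essentially the same route as the paper's: both write the transition operator as the average $\frac{1}{K}\sum_i P_i$ of commuting per-coordinate tensor-factor operators, simultaneously diagonalize to identify the spectrum as the set of averages $\frac{1}{K}\sum_i \lambda_{j_i,i}$, and extract $\lambda_{2,M}=\frac{K-1+\max_i\lambda_{2,i}}{K}$. Your explicit verification that flipping several coordinates cannot beat flipping the single best one is a welcome detail the paper leaves implicit, but it is not a different argument.
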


\begin{proof}
The transition matrix of $M$ can be described as
$$
\frac{\sum_{i=1}^{K} \left [ \bigotimes_{j = 1}^{i-1}  \mathbf{I}_j
  \right ] \otimes \mathbf{M}_i \otimes \bigotimes_{j=i+1}^K \mathbf{I}_j}{K}
$$
where $\otimes$ denotes usual tensor product from linear  algebra, $\mathbf{M}_i$ denotes the the transition matrix of the Markov chain on the $i$th coordinate, $\mathbf{I}_j$ denotes the identical matrix with the same size as $\mathbf{M}_j$. Since all pairs of terms in the sum above commute, the eigenvalues of $M$ are
$$
\left\{ \frac{1}{K} \sum_{i=1}^K \lambda_{j_i,i} : 1 \le j_i \le |\Omega_i|\right\}
$$
where $\Omega_i$ is the state space of the Markov chain $M_i$ on the $i$th coordinate. The
second largest eigenvalue of $M$ is then obtained from combining the maximal second largest eigenvalue  (maximal among all the second largest eigenvalues of the component transition matrices) with the other largest eigenvalues, i.e., with all others being $1$s:
$$
\frac{K-1+\max_{i}\left\{ \lambda_{2,i}\right\}}{K}\;.
$$
If $g$ denotes the smallest spectral gap, ie. $ g=1-\max_{i}\left\{  \lambda_{2,i} \right\}$, then from above the second largest eigenvalue of $M$ is
$$
\frac{K-g}{K}=1-\frac{g}{K}
$$
namely, the second largest eigenvalue of $M$ is only $K$ times closer to $1$ than the maximal second largest eigenvalue of the individual Markov chains.
\qquad\end{proof}

Next, we announce two theorems that are direct extensions of statements for fast mixing swap Markov chains for regular degree sequences (Cooper, Dyer and Greenhill \cite{cdg})  and for half-regular bipartite degree sequences (Erd\H{o}s, Kiss, Mikl\'os and Soukup \cite{ekms}).
\begin{theorem}
\label{theo:regular}
The swap Markov chain on the realizations of almost regular degree sequences is rapidly mixing.
\end{theorem}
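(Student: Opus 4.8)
The plan is to adapt the multicommodity flow argument of Cooper, Dyer and Greenhill \cite{cdg} (which itself follows Sinclair's method \cite{sinc}) from the strictly regular case to the almost regular case, in which every degree lies in $\{d,d+1\}$ for some $d$. Since the degree sequence is fixed, any two realizations $G$ and $G'$ agree in degree at every vertex, so each vertex has even degree in the symmetric difference $G\triangle G'$; consequently $G\triangle G'$ decomposes into edge-disjoint closed circuits. I would route a flow of value $\pi(G)\pi(G')$ from $G$ to $G'$ along a canonical path obtained by fixing, once and for all, a linear order on the vertices, then processing the circuits of $G\triangle G'$ one at a time in this order and resolving each by a canonical sequence of swaps. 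This reproduces the skeleton of the CDG construction; because the degree sequence is fixed, no intermediate step of the construction ever leaves the realization class.

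Next I would bound the congestion $\rho$, that is, the maximum over transitions $(Z,Z')$ of the total flow routed through $(Z,Z')$ divided by $\pi(Z)\,T(Z'|Z)$. Following CDG, the standard device is an injective encoding that, from the active transition $(Z,Z')$ together with a small amount of auxiliary data, reconstructs the pair $(G,G')$ whose path uses that transition. The auxiliary data is captured by an encoding graph $L$ assembled from $Z$, $G$ and $G'$; the crux is that $L$ realizes a degree sequence differing from the original one at only $O(1)$ vertices, each by $O(1)$, so the number of admissible $L$ is at most a polynomial factor times the number of realizations. Since $\pi$ is uniform on the state space (the swap chain is symmetric), this bounds $\rho$ by a polynomial in $n$, and Sinclair's theorem, relating $1/(1-\lambda_2)$ to $\rho$ and the maximal path length (both polynomial), then yields rapid mixing.

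The main obstacle is precisely this congestion estimate in the presence of two distinct degree values. In the purely regular case, every deviation from $d$-regularity arising along the canonical path or in the encoding graph is a localized defect against a uniform background and is therefore easy to count. With degrees already spread over $\{d,d+1\}$, I must verify that the $O(1)$ perturbations introduced by a single transition and by the encoding do not push $L$ outside a family whose cardinality remains comparable to that of the realization class. Concretely, I would track the partial degrees through each swap, show that at most two vertices ever deviate from the almost-regular profile and only by one, and then re-run the CDG counting lemmas over this slightly enlarged family of ``almost regular with $O(1)$ defects'' degree sequences. Once these counting bounds are re-established with the same polynomial order as in \cite{cdg}, the rapid mixing conclusion follows exactly as there; the full verification is deferred to the Appendix.
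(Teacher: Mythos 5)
Your proposal follows the same overall route as the paper: adapt the multicommodity-flow/canonical-path machinery of Cooper, Dyer and Greenhill \cite{cdg}, and you correctly observe that the circuit decomposition and path construction go through unchanged because the degree sequence is fixed. However, there is a genuine gap at exactly the point where the theorem differs from the regular case, and your diagnosis of where that point lies is slightly misdirected. In the CDG argument the encoding is (essentially) the symmetric matrix $L=G+G'-Z$; since $G$, $G'$ and $Z$ all realize the \emph{same} degree sequence, the row sums of $L$ are exactly the original degrees -- there are no ``degree defects at $O(1)$ vertices.'' The defects are instead a bounded number of entries equal to $-1$ or $2$, and the one lemma of \cite{cdg} that uses regularity (their Lemma 3) asserts that such an $L$ can be converted into a genuine $0$--$1$ realization by at most $3$ switches; this is what bounds the number of admissible encodings by a polynomial times the number of realizations. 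So the family you propose to enumerate (``almost regular with $O(1)$ degree defects'') is not the right object, and the counting bound you plan to ``re-run'' is precisely the switch-correction lemma you have deferred.

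The paper's proof supplies exactly this missing piece: it shows by an explicit case analysis that when the degrees take the two values $d$ and $d+1$ (with $d\ge 0$ and $d+1\le n$), every bad entry can still be eliminated -- for each bad edge one finds a fourth vertex to switch with, using the fact that any two degrees differ by at most $1$ (e.g.\ $d(v_1)\ge d(v_3)-1$ forces the existence of a neighbour of $v_1$ that is not a neighbour of $v_3$) -- at the cost of at most $4$ switches instead of $3$, which only worsens the polynomial degree of the congestion bound. Until you prove this correction lemma for the two-valued case, your argument establishes nothing beyond what \cite{cdg} already gives; the entire content of the extension lives in that lemma.
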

\begin{theorem}
\label{theo:half-regular}
The swap Markov chain on the realizations of almost half-regular bipartite degree sequences is rapidly mixing.
\end{theorem}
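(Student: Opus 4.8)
The plan is to \emph{extend the multicommodity flow argument} of Erd\H{o}s, Kiss, Mikl\'os and Soukup \cite{ekms} (built on Sinclair's method \cite{sinc}) from the exactly half-regular case to the case in which the regular side is allowed two consecutive degree values. Throughout, fix an almost half-regular bipartite degree sequence on $(U,V)$ with all $U$-side degrees lying in $\{d,d+1\}$. Because swaps are symmetric, the swap chain is automatically reversible with respect to the uniform distribution on the realizations, and it is irreducible and aperiodic by the same swap-connectivity arguments used in the half-regular case. It therefore suffices, by Sinclair's theorem, to exhibit a fractional multicommodity flow whose maximum edge congestion is bounded by a polynomial in $n$.

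First I would reuse the canonical-path skeleton essentially verbatim. For an ordered pair of realizations $(A,B)$ the symmetric difference $A\triangle B$ has, at every vertex, equal $A$- and $B$-degree, hence decomposes into edge-disjoint alternating circuits. I would fix the same canonical ordering of these circuits and the same circuit-by-circuit \emph{sweeping} procedure that turns $A$ into $B$ one swap at a time, and then spread a unit of flow from $A$ to $B$ along the resulting path(s), weighted by $\pi(A)\pi(B)$ exactly as in \cite{ekms}. None of these definitions refer to the exact value of the $U$-degrees, so the construction transfers unchanged; what must be re-examined is only the \emph{congestion estimate}.

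The heart of the argument, and the step I expect to be the main obstacle, is the congestion bound obtained through the encoding (decoding) lemma: for a fixed transition $Z\to Z'$ one shows that a source/target pair $(A,B)$ routed through it can be reconstructed from $Z\to Z'$ together with a bounded amount of auxiliary data, so that the number of pairs using any transition, weighted by $\pi$, stays within a polynomial factor of $\pi(Z)\,T(Z'|Z)$. In the half-regular proof several places exploit that every $U$-vertex has the \emph{same} degree $d$ --- for example when charging the ``defect'' vertices created while a circuit is half-swept and when identifying the partner edge of the current swap. The hard part will be to verify that relaxing these degrees to $\{d,d+1\}$ costs only polynomial factors. I would track each use of exact regularity and replace it by the observation that the prescribed degree sequence is \emph{globally fixed and known to the decoder}, so the high/low status of every $U$-vertex is available for free, and that any residual ambiguity is confined to a single additional vertex per swap, contributing at most an $O(\poly(n))$ multiplicative factor to the congestion. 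Combining the unchanged flow value with this adjusted congestion bound yields a $\poly(n)$ bound on $1/(1-\lambda_2)$, as required.

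As a sanity check I would contrast this with the tempting \emph{reduction}: append one vertex $w$ to $V$ joined to all degree-$d$ vertices of $U$, making the $U$-side exactly $(d+1)$-regular and invoking \cite{ekms} as a black box. This fails to preserve the problem, because the augmented half-regular swap chain also moves the edges at $w$ and hence mixes over \emph{which} subset of $U$ carries degree $d$, whereas our target chain must keep the high/low assignment fixed; freezing $w$'s edges merely returns the original chain and gives no reduction. This confirms that a direct re-derivation of the congestion bound, rather than a reduction, is the appropriate route, and it is exactly this re-derivation that is carried out in the Appendix.
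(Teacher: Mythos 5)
Your overall strategy --- rerun the canonical-path/multicommodity-flow argument of \cite{ekms} and audit every use of exact half-regularity --- is the same route the paper takes, and your remark that the naive reduction (appending a vertex $w$ joined to all degree-$d$ vertices of $U$) does not preserve the chain is correct. The gap is that your sketch stops precisely where the substantive work lies and proposes the wrong mechanism for closing it. In \cite{ekms} half-regularity is used in exactly one place, their Lemma~4.6: along a canonical path from $A$ to $B$ through the current state $Z$, the encoding is essentially the matrix $A+B-Z$, which has the prescribed row and column sums but may contain a bounded number of ``bad'' entries ($-1$ or $2$), and one must show that a \emph{constant} number of switches converts it into a genuine $0$--$1$ realization. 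This is a combinatorial existence statement about corrective switches; it is what bounds the multiplicity of the encoding, and hence the congestion, by a polynomial. It is not resolved by observing that ``the degree sequence is globally fixed and known to the decoder'' --- the decoder's information is not the issue; the existence of the corrective switches is.

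The missing argument is short but essential. In the almost half-regular case the relevant row sums of $A+B-Z$ differ pairwise by at most $1$. If entry $(i,j)$ is a bad $2$, then column $j$ contains a $0$ in some row $l$; those two entries differ by $2$ while the sums of rows $i$ and $l$ differ by at most $1$, so some other column $m$ must carry a $0$ in row $i$ and a $1$ in row $l$, and the switch on positions $(i,j)$, $(i,m)$, $(l,j)$, $(l,m)$ eliminates the bad $2$ without creating a new bad entry; a symmetric argument handles a bad $-1$. Without this step (or an equivalent one) nothing in your proposal guarantees that the intermediate states of a canonical path stay within a polynomially bounded neighbourhood of true realizations of the almost half-regular sequence, so the congestion bound you assert does not follow.
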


The proofs of these results are not directly relevant to this paper. They are based on slight extensions of similar theorems presented in papers \cite{cdg} and \cite{ekms} with very long proofs. Here we only provide brief sketches for the proofs, in the Appendix.

\medskip\noindent
We are now ready to prove the main theorem.
\begin{proof} (Theorem~\ref{theo:main})
We show that the RSO Markov chain on balanced realizations fulfills the conditions in
Theorem~\ref{theo:general}. First we show that condition $(i)$ of Theorem~\ref{theo:general} holds. When restricted to the partition $Y_x$ (that is with $x$ fixed), the RSO Markov chain over the balanced realizations walks on the union of almost semi-regular and almost regular graphs. By restriction here we mean that all
probabilities which would (in the original chain) leave $Y_x$ are put onto the shelf-loop probabilities. Since an RSO changes only one coordinate at a time, independently of other coordinates, all the conditions in Theorem~\ref{theo:directproduct} are fulfilled. Thus the relaxation time of the RSO Markov chain restricted onto $Y_x$ is bounded from above by the relaxation time of the chain restricted onto that coordinate (either an almost semi-regular bipartite or an almost regular graph) on which this restricted chain is the slowest (the smallest
gap). However, based on Theorems~\ref{theo:regular}~and~\ref{theo:half-regular}, all these restrictions are fast mixing, and thus by  Theorem~\ref{theo:directproduct} the polynomial bound in $(i)$ holds. (Here $K = \frac{k(k+1)}{2}$, see Definition~\ref{def:union} and note that an almost semi-regular bipartite graph is also an almost half-regular bipartite graph.)

Next we show that condition $(ii)$ of Theorem~\ref{theo:general} also holds. The first coordinate is the union of auxiliary bipartite graphs, all of which are half-regular. The $M'$ Markov chain corresponding to Theorem~\ref{theo:general} is the swap Markov chain on these auxiliary graphs. Here each possible swap has a probability
$$
\frac{1}{n(n-1)(n-2)(n-3)}
$$
and by Theorem~\ref{theo:swapexist} it is guaranteed that condition \ref{eq:condition} is fulfilled.  Since, again all conditions of Theorem~\ref{theo:directproduct} are fulfilled (mixing is fast within any coordinate due to Theorems~\ref{theo:regular}~and~\ref{theo:half-regular}), the $M'$ Markov chain is also fast mixing.  Condition in Equation~(\ref{eq:condition}) holds due to Theorem~\ref{theo:swapexist}. Since all conditions in Theorem~\ref{theo:general} hold, the RSO swap Markov chain on balanced realizations is also rapidly mixing.
\qquad\end{proof}

\section{Conclusions}\label{sec:con}

We have introduced a swap Markov chain over the space of balanced realizations of an arbitrary JDM, and therefore of {\em arbitrary degree sequences}, and proved that it is fast mixing.  Our proof  is based on the following observations and intermediate results. Any balanced realization can be represented as the labeled union of almost regular and almost semi-regular bipartite graphs. Every balanced realization induces a collection of auxiliary bipartite graphs that are all half-regular and which can be naturally used to generate a disjoint partition of the state space of all balanced realizations. Using conductance methods we then directly proved a general theorem for fast mixing of Markov chains over such structured state spaces, which is similar to an earlier result by Martin and Randall \cite{MR06}. We have also provided extensions to the existing proofs for MCMC fast mixing in the spaces of almost regular graphs based on results of Cooper, Dyer and Greenhill, \cite{cdg}, and of almost half-regular bipartite graphs based on results of Erd\H{o}s, Kiss, Mikl\'os and Soukup  \cite{ekms} and on results of Mikl\'os, Erd\H{o}s and Soukup \cite{mes}.

The obvious open question is the existence of a fast mixing Markov chain for sampling from the full space of simple graphs realizing a given JDM. Since a given JDM also uniquely determines the degree sequence, this could provide an important insight towards proving fast mixing for the degree based MCMC problem, which currently is still open.

\section*{Acknowledgements} B\'alint T\'oth is thanked for his comments related to
Theorem \ref{theo:directproduct}. The authors express their thanks to K.E. Bassler and the Max-Planck-Institut f\"{u}r Physik Komplexer Systeme  for its kind hospitality within its ``Adaptive Networks" advanced study group program, where the  writing of this paper has been completed.

\Appendix
\section{Sketches of proofs for Theorems \ref{theo:regular} and \ref{theo:half-regular} }
\label{sec:app}
The proofs below are direct continuations of the proofs in papers \cite{cdg} and \cite{ekms}. They can be followed within the language and the context of those two papers, which, however, we do not reproduce here, for reasons of brevity.

\medskip
\noindent {\bf Sketch the proof of Theorem \ref{theo:regular} }:
The proof is based on \cite{cdg}. In that paper, the authors,  Cooper, Dyer and  Greenhill, prove the rapid mixing nature of the swap Markov chain in case of regular graphs. The only lemma where they use regularity is Lemma 3, which claims the following: consider a graph in which at most $4$ edges are ``bad'', meaning that they have an assigned value $-1$ or $2$, and they form a subgraph of one of the following $5$ configurations shown on Figure~\ref{fig:5cases}. All other edges get a value $1$, and for each vertex, the sum of the assigned values of its edges is a constant $d \le n/2$, where $n$ is the number of vertices of the graph. Then at most $3$ switches are sufficient to transform this graph into a graph that does not contain any bad edges. A switch operates on $4$ vertices $v_1,v_2,v_3,v_4$, and increases by $1$ the assigned value of edges $(v_1,v_2)$ and $(v_3,v_4)$ (if the edge is not present, then an edge is added with
value $1$, if the assigned value was $-1$, the edge is deleted) and decreases by $1$ the assigned values of edges $(v_2,v_3)$ and $(v_4,v_1)$ (if the modified value is $0$, then the edge is deleted).
\begin{figure}
\begin{center}
\begin{tikzpicture}[auto,scale=.32]
\draw [fill=black] (0,0) circle (10pt);
\draw [fill=black] (3,0) circle (10pt);
\draw [fill=black] (3,-3) circle (10pt);
\draw [fill=black] (3,3) circle (10pt);
\draw [fill=black] (7,-2) circle (10pt);
\draw [fill=black] (7,2) circle (10pt);
\draw (0,0) -- ( 3,0) ; \draw node at (1.4,0.8) {{{\bf 2}}};
\draw (3,-3) --(3,0); \draw node at (3.8,1.5) {{ {\bf -1}}};
\draw (3,3) --(3,0); \draw node at (3.8,-1.5) {{ {\bf -1}}};
\draw (7,-2) --(7,2); \draw node at (7.8,0) {{{\bf ?}}};
\draw node at (4.8,-4) {{{}}};
\end{tikzpicture}
\qquad \raisebox{1cm}{{\Large {\bf OR}}}\qquad
\begin{tikzpicture}[auto,scale=.32]
\draw [fill=black] (0,0) circle (10pt);
\draw [fill=black] (3,0) circle (10pt);
\draw [fill=black] (3,-3) circle (10pt);
\draw [fill=black] (3,3) circle (10pt);
\draw [fill=black] (6,-3) circle (10pt);
\draw (0,0) -- ( 3,0) ; \draw node at (1.4,0.8) {{{\bf 2}}};
\draw (3,-3) --(3,0); \draw node at (3.8,1.5) {{ {\bf -1}}};
\draw (3,3) --(3,0); \draw node at (3.8,-1.5) {{ {\bf -1}}};
\draw (3,-3) --(6,-3); \draw node at (4.8,-4) {{{\bf ?}}};
\end{tikzpicture}
\qquad \raisebox{1cm}{{\Large {\bf OR}}}\qquad
\begin{tikzpicture}[auto,scale=.32]
\draw [fill=black] (0,0) circle (10pt);
\draw [fill=black] (3,0) circle (10pt);
\draw [fill=black] (3,-3) circle (10pt);
\draw [fill=black] (3,3) circle (10pt);
\draw [fill=black] (6,-3) circle (10pt);
\draw (0,0) -- ( 3,0) ; \draw node at (1.4,0.8) {{{\bf -1}}};
\draw (3,-3) --(3,0); \draw node at (3.8,1.5) {{ {\bf -1}}};
\draw (3,3) --(3,0); \draw node at (3.8,-1.5) {{ {\bf 2}}};
\draw (3,-3) --(6,-3); \draw node at (4.8,-4) {{{\bf ?}}};
\end{tikzpicture}

\qquad \raisebox{1cm}{{\Large {\bf OR}}}\qquad
\begin{tikzpicture}[auto,scale=.32]
\draw [fill=black] (0,0) circle (10pt);
\draw [fill=black] (3,0) circle (10pt);
\draw [fill=black] (3,-3) circle (10pt);
\draw [fill=black] (3,3) circle (10pt);
\draw (0,0) -- ( 3,0) ; \draw node at (1.4,0.8) {{{\bf 2}}};
\draw (3,-3) --(3,0); \draw node at (3.8,1.5) {{ {\bf -1}}};
\draw (3,3) --(3,0); \draw node at (3.8,-1.5) {{ {\bf -1}}};
\draw (0,0) -- (3,-3);  \draw node at (1,-2) {{{\bf ?}}};

\end{tikzpicture}
\qquad \raisebox{1cm}{{\Large {\bf OR}}}\qquad
\begin{tikzpicture}[auto,scale=.32]
\draw [fill=black] (0,0) circle (10pt);
\draw [fill=black] (3,0) circle (10pt);
\draw [fill=black] (3,-3) circle (10pt);
\draw [fill=black] (3,3) circle (10pt);
\draw (0,0) -- ( 3,0) ; \draw node at (1.4,0.8) {{{\bf -1}}};
\draw (3,-3) --(3,0); \draw node at (3.8,1.5) {{ {\bf 2}}};
\draw (3,3) --(3,0); \draw node at (3.8,-1.5) {{ {\bf -1}}};
\draw (0,0) -- (3,-3);  \draw node at (1,-2) {{{\bf ?}}};
\end{tikzpicture}
\end{center}
\caption{$5$ possible ``bad'' configurations. The edge with a ``?''
  might get a value of either $-1$ or $2$. See text for details.}\label{fig:5cases}
\end{figure}
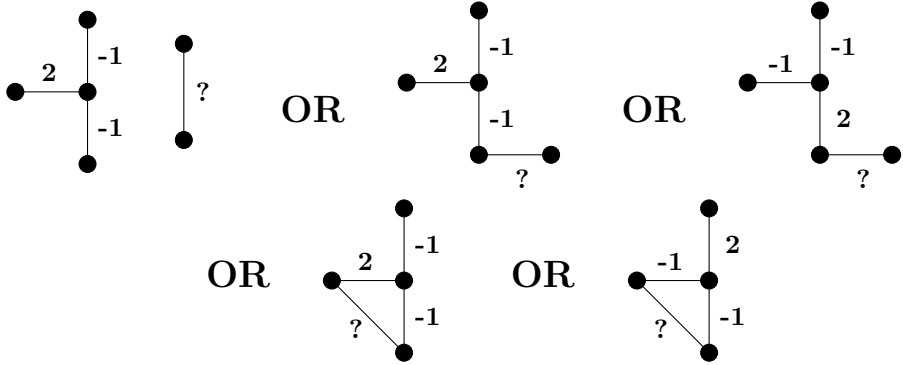

We prove that a similar lemma holds in the situation when the degrees are both $d \ge
0$ and $d+1 \le n$, in which case at most $4$ switches are necessary. The $4$ switches instead of $3$ causes a higher order, but still polynomial upper bound on the relaxation time.

The first observation is that the graphs with bad edges are obtained by a matrix $G+G'-Z$, where all $G$, $G'$ and $Z$ are adjacency matrices of graphs with the same degree sequence. What follows is that whenever a vertex has a bad valued edge, the degree of the edge can be neither $0$ nor $n$.

If there is vertex $v_1$ which has both a $2$ and a $-1$ edge, then let denote $v_2$ and $v_3$ the corresponding neighbor vertices, respectively. There is a vertex $v_4$ such that $v_2$ is not connected to $v_4$, and $v_3$ might or might not be connected to $v_4$. We apply a switch on $v_1$, $v_2$, $v_3$ and $v_4$, which removes two bad
edges and creates at most $1$ new bad edge, thus decreasing the number of bad edges by at least $1$.

If there is no vertex incident with different types of bad edges, then consider any bad edge $(v_1,v_2)$. If the assigned value is $-1$, then we have to find a $v_3$ which is connected to $v_2$. If there is a $-1$ valued edge $(v_3,v_4)$, then we apply a switch on $v_1$, $v_2$, $v_3$ and $v_4$, which removes two bad edges and creates at most $1$ new bad edge, thus decreasing the number of bad edges by at least $1$. Otherwise, there must be a vertex $v_4$ which is not connected to $v_3$ but connected to $v_1$, since $d(v_1) \ge d(v_3)-1$, and the difference on the sum of bad values for $v_1$ and $v_3$ is at least $2$. We apply a switch on $v_1$, $v_2$, $v_3$ and $v_4$, which removes the $1$ bad edge, $(v_1,v_2)$.

Finally, if the assigned bad value to edge $(v_1,v_2)$ is $2$, then there must be a vertex $v_3$ such that $v_2$ is not connected to $v_3$. If there is a $2$ valued edge $(v_3,v_4)$, then we apply a switch on $v_1$, $v_2$, $v_3$ and $v_4$, which removes two bad edges and creates at most $1$ new bad edge, thus decreasing the number of bad edges by at least $1$. Otherwise, there must be a vertex $v_4$ which is connected to $v_3$ but not connected to $v_1$, since $d(v_1) -1 \le d(v_3)$, and the difference on the sum of bad values for $v_1$ and $v_3$ is at least $2$. We apply a switch on $v_1$, $v_2$, $v_3$ and $v_4$, which removes the $1$ bad edge $(v_1,v_2)$.

Hence, while there are bad value edges, we can apply a switch that decreases the number of bad edges at least by $1$.  Since there are at most $4$ bad edges, the number of necessary switches is at most $4$.

\medskip\noindent {\bf Sketch the proof of Theorem \ref{theo:half-regular} }:
The proof is based on \cite{ekms}. In that paper, the authors prove the rapid mixing nature of a Markov chain on half-regular degree sequence realizations with a forbidden (possibly empty) star and (also possibly empty) one factor. The only place where they use half-regularity is their Lemma 4.6. In that lemma, the authors prove that a certain 0-1 matrix with at most $3$ possible ``bad'' values, at most two $-1$ values and at most one $2$ value in the same column can be transformed into a $0-1$ matrix using at most $3$ switches. Here we prove if there is no forbidden sub-graph, and the degree sequence is almost half-regular (instead of half-regular), then the same lemma
holds.

Indeed, in that case, the difference in the row sums can be also at most $1$. Consider the row $i$ containing a bad value $2$ in column $j$. There must be a row $l$ containing $0$ in column $j$. The difference between $2$ and $0$ is $2$, while the difference between the sums of rows $i$ and $l$ can be at most $1$, therefore, we have to find another column $m$, where the corresponding values are $0$ and $1$, and a switch on these $4$ values eliminates the bad value $2$ without creating a new bad value. Similar reasoning holds for the bad value $-1$.

\goodbreak
\bibliographystyle{plain}

\end{document}